\newtheorem{theor}{~~~~Theorem}
\newtheorem{prop}{~~~~Proposition}
\newtheorem{cor}{~~~~Corollary}
\newtheorem{lemma}{~~~~Lemma}
\newtheorem{remark}{~~~~Remark}
\newtheorem{defin}{~~~~Definition}
 \newcommand{\Real}{\mathbb{R}}
 \newcommand{\Complex}{\mathbb{C}}
 \newcommand{\hor}{\mathbf{hor}}
 \newcommand{\ver}{\mathbf{ver}}
  \newcommand{\di}{\textbf{div}}
 \newcommand{\tr}{\textbf{tr}}
 \newcommand{\D}{\mathfrak D}
 \newcommand{\Di}{\mathcal D}
 \newcommand{\ham}{\mathbf H}
 \newcommand{\x}{\mathbf x}
 \newcommand{\p}{\mathbf p}
 \newcommand{\tv}{\mathbf v}
 \newcommand{\MCP}[2]{\mathcal{MCP}(#1,#2)}
 \newcommand{\gMCP}[3]{\mathcal{MCP}(#1; #2, #3)}
 \newcommand{\hMCP}[4]{\mathcal{MCP}(#1, #2; #3, #4)}
 \newcommand{\Rm}{\mathbf{Rm}}
\newcommand{\vol}{\text{vol}}
\begin{document}
\bibliographystyle{plain}
\title[
Ricci curvature
type lower bounds]{Ricci curvature
type lower bounds for sub-Riemannian structures on Sasakian manifolds
}
\author{Paul W. Y. Lee}
\address{Room 216, Lady Shaw Building, The Chinese University of Hong
Kong, Shatin, Hong Kong}
\email{wylee@math.cuhk.edu.hk}
\author{Chengbo Li}
\address{Department of
Mathematics, Tianjin University, Tianjin, 300072, P.R.China}
\email{chengboli@tju.edu.cn}
\author{Igor Zelenko}
\address{Department of Mathematics, Texas A$\&$M University,
   College Station, TX 77843-3368, USA}
   \email{zelenko@math.tamu.edu}
\thanks{The authors would like to thank the referees for their constructive comments. The first author's research was supported by the Research Grant Council of Hong Kong (RGC Ref. No. CUHK404512). The second author was supported in part by the National Natural Science Foundation of China (Grant No. 11201330)}.

\newcounter{pl}
\newcommand{\pl}[1]
{\stepcounter{pl}$^{\bf PL\thepl}$%
\footnotetext{\hspace{-3.7mm}$^{\blacksquare\!\blacksquare}$
{\bf PL\thepl:~}#1}}

\begin{abstract}
Measure contraction properties are generalizations of the notion of Ricci curvature lower bounds in Riemannian geometry to more general metric measure spaces. In this paper, we give sufficient conditions for a Sasakian manifold equipped with a natural sub-Riemannian distance to satisfy these properties. Moreover, the sufficient conditions are defined by the Tanaka-Webster curvature. This generalizes the earlier work in \cite{AgLe1} for the three dimensional case and in \cite{Ju} for the Heisenberg group. To obtain our results we use the intrinsic Jacobi equations along sub-Riemannian extremals, coming from the theory of canonical moving frames for curves in Lagrangian Grassmannians \cite{LiZe1, LiZe2}. The crucial new tool here is a certain decoupling of the corresponding matrix Riccati equation. It is also worth pointing out that our
method leads to exact formulas for the measure contraction in the case of the corresponding homogeneous models in the considered class of sub-Riemannian  structures.
\end{abstract}
\keywords{Sub-Riemannian metrics, Measure Contraction properties, contact manifolds, Jacobi equations and Jacobi curves, curves in Lagrangian Grassmannians, symplectic invariants, conjugate points, Matrix Riccati equation}
\subjclass[2010]{53C17, 53D10, 70G45, 34C10, 53C25, 53C55}
\date{\today}

\maketitle

\section{Introduction}

In recent years, there are lots of efforts in generalizing the notion of Ricci curvature lower bounds in Riemannian geometry and its consequences to more general metric measure spaces. One of them is the work of \cite{LoVi1, LoVi2, St1, St2} where a notion of synthetic Ricci curvature lower bound
was introduced using the theory of optimal transportation. These conditions are generalizations of Ricci curvature lower bounds (and curvature-dimension conditions introduced in \cite{BaEm}) to length spaces equipped with a measure (length spaces are metric spaces on which the notion of geodesics is defined). In \cite{Oh2}, it was shown that this synthetic Ricci curvature lower bound coincides with the pre-existing notion of Ricci curvature lower bounds in the case of Finsler manifolds.

On the contrary, it was shown in \cite{Ju} that this synthetic Ricci curvature lower bound defined using the theory of optimal transportation are not satisfied on the Heisenberg group, the simplest sub-Riemannian manifold (Note however that a type of curvature-dimension conditions were defined in \cite{BaGa1, BaGa2} using a  sub-Riemannian version of the Bochner formula). It was also shown in \cite{Ju} that the Heisenberg group satisfies another generalization of Ricci curvature lower bounds to length spaces called measure contraction properties \cite{St1, St2, Oh1}.

Measure contraction properties are essentially defined by the rate of contraction of volume along geodesics inspired by the classical Bishop volume comparison theorem. For the Riemannian manifold of dimension $n$, the measure contraction property $\MCP k {n}$ is equivalent to the conditions that the Ricci curvature is bounded below by $k$.
 In \cite{Ju}, it was shown that  the left-invariant sub-Riemannian metric on the Heisenberg group of dimension $2n+1$ satisfies the condition $\MCP 0 {2n+3}$. Such sub-Riemannian metrics can be regarded as the flat one among all sub-Riemannian metrics on Sasakian manifolds of the same dimension.

The next natural task is to study the measure contraction property for general (curved) sub-Riemannian metrics on Sasakian manifolds and, in particular, to understand what differential invariants of such metrics are important for their measure contraction property. A natural way of doing this is to analyze  the Jacobi equation along a sub-Riemannian extremal. In order to write the Jacobi equation intrinsically  one needs first to construct a connection canonically associated with a geometric structure. The construction of such connection is known in several classical cases such as the Levi-Civita connection for Riemannian metrics, the Tanaka-Webster connection for a special class of sub-Riemannian contact metrics associated with CR structures (\cite{tan, web})  or its generalization, the Tanno connection, to more general class of sub-Riemannian contact metrics associated with (non-integrable) almost CR structures (\cite{tanno}). However, these constructions use specific properties of the geometric structures under consideration  and it is not clear how to generalize them to more general sub-Riemannian structures.

An alternative  approach for obtaining intrinsic Jacobi equation {\it without preliminary construction of a canonical connection} was initiated in \cite{agrgam1} and further developed in \cite{jac1, LiZe1, LiZe2}. In this approach one replaces the Jacobi equation along an extremal by a special and a priori intrinsic curve of Lagrangian subspaces in a linear symplectic space, i.e. a curve in a Lagrangian Grassmannian. This curve is defined up to the natural action of the linear symplectic group. It contains all information about the space of Jacobi fields along the extremal and therefore it is called the {\it Jacobi curve of the extremal}.

By analogy with the classical Frenet-Serret frame for a curve in an Euclidean space, one can construct a bundle of canonical moving symplectic frames for a curve in a Lagrangian Grassmannian satisfying very general assumptions \cite{LiZe1, LiZe2}. The structure equations for these moving frames can be considered as the intrinsic Jacobi equations, while the nontrivial entries in the matrices of these structure equations give the invariants of the original geometric structures. These invariants can be used in principal for obtaining various comparison type results including the measure contraction properties. Although the construction of these invariants is algorithmic, to express them explicitly in terms of the original geometric structure is not an easy task already for sub-Riemannian contact case (\cite{AgLe1, LiZe3, LeLi}). Besides, in contrast to the Riemannian case,  the level sets of a sub-Riemannian Hamiltonian are not compact. Therefore, to control the bounds for the symplectic invariants of the Jacobi curves along extremals, additional assumptions have to be imposed.

For the first time the scheme based on the geometry of Jacobi curves was used in the study of the measure contraction properties  in \cite{AgLe1} (see also \cite{Hu,ChYa,AgLe2} for closely related results), where  general three dimensional contact sub-Riemannian manifolds were treated. In particular,  it was shown there  that the measure contraction properties $\MCP 0 5$ are characterized by the Tanaka-Webster sectional curvature of the planes which are the fibers of the contact distribution. More precisely, the sub-Riemannian metric naturally associated with a three-dimensional Sasakian manifold satisfies $\MCP 0 5$ if and only if the corresponding Tanaka-Webster sectional curvature is bounded below by 0.

Moreover, the generalized measure contraction properties $\gMCP{k}{2}{3}$ were defined there. It was shown that the sub-Riemannian metric naturally associated with a three-dimensional Sasakian manifold satisfies this condition if and only if the aforementioned Tanaka-Webster sectional curvature  is bounded below by $k$. The reason for appearance of $(2,3)$ in the notation $\gMCP{k}{2}{3}$ is to emphasize that this property holds for a class of sub-Riemannian metrics on rank $2$ distributions in $3$-dimensional manifolds.

In this paper, we generalize the results in \cite{Ju, AgLe1} on sub-Riemannian metrics naturally associated with a three-dimensional Sasakian manifold  to sub-Riemannian metrics associated with Sasakian manifolds of arbitrary (odd) dimension. We introduce new generalized measure contraction properties  $\hMCP{k_1}{k_2}{N-1}{N}$ (see Definition \ref{gMC} below) which are motivated by the measure contraction of the sub-Riemannian space forms (see Propositions \ref{maincor2}, \ref{maincor3}, and \ref{maincor4}) and discuss when a Sasakian manifold equipped with the sub-Riemannian structure mentioned above satisfies them.
The reason for appearance of $(N-1,N)$ in the notation $\hMCP{k_1}{k_2}{N-1}{N}$ is to emphasize that this property holds for a class of sub-Riemannian metrics on distributions of rank $N-1$ in $N$-dimensional manifolds (with odd $N\geq 3$). Note that for $N=3$ the property $\hMCP{k_1}{k_2}{N-1}{N}$ does not depend on $k_2$ and coincides with the property  $\gMCP{k_1}{2}{3}$ of \cite{AgLe1}.

To obtain these results we analyze the matrix Riccati equation associated with the structure equation for the canonical moving frame of the Jacobi curves (Lemma \ref{keylem} and section \ref{Ricsec} below) on the basis of  comparison theorems for matrix Riccati equations (\cite {Ro, Le}) and use the expressions for the symplectic invariants of the Jacobi curves (see section 5 below) based on the calculation in \cite{LeLi}.

The key new observation, which was first appeared in an earlier version of this paper, is the decoupling of these equations after taking the traces of appropriate blocks (as in the proof of Lemma \ref{Riccati2} below): the coupled equations after taking the traces yields to decoupled inequalities, which leads to the desired estimates, involving the Tanaka-Webster curvature of the manifold. Surprisingly, the passage from equations to inequalities does not affect the fact that we get exact formulas for the measure contraction in the case of the corresponding homogeneous models in the considered class of sub-Riemannian  structures. (see Propositions \ref{maincor2}, \ref{maincor3}, and \ref{maincor4} below).

Next, we state the condition $\hMCP{k_1}{k_2}{N-1}{N}$ and the main results in more detail. Let $M$ be a sub-Riemannian manifold (see Section \ref{subIntro} for a discussion on some basic notions in sub-Riemannian geometry). For simplicity, we assume that $M$ satisfies the following property: given any point $x_0$ in $M$, there is a subset of full measure on $M$  (i.e  a complement to a set of Lebesgue measure zero) such that any point of this subset is connected to $x_0$ by a unique length minimizing sub-Riemannian geodesic. By  \cite{CaRi}, this property holds  for all contact sub-Riemannian metrics due to the lack of  abnormal extremals (see Lemma \ref{clar1.3} of section \eqref{Ricattisec}).

For any point $x$ in this subset of full measure let $t\mapsto \varphi_t(x)$ be the unique geodesic starting from $x$ and ending at $x_0$. This defines a 1-parameter family of Borel maps $\varphi_t$. Let $d$ be the sub-Riemannian distance and let $\mu$ be a Borel measure. The following is the original measure contraction property studied in \cite{St1, St2, Oh1}:

A metric measure space $(M, d, \mu)$ satisfies $\MCP 0 N$ if
\[
\mu(\varphi_t(U))\geq (1-t)^N\mu(U)
\]
for each point $x_0$, each Borel set $U$, and for all $t$ in the interval $[0,1]$.

\begin{defin}
\label{gMC}
A metric measure space $(M, d, \mu)$ satisfies $\hMCP{k_1}{k_2}{N-1}{N}$ if, for each point $x_0$, each Borel set $U$, and each $t$ in the interval $[0,1]$,
\[
\mu(\varphi_t(U))\geq \int_{U}\frac{(1-t)^{N+2}\mathcal M_1(k_1d^2(x,x_0),t)\mathcal M_2^{N-3}(k_2d^2(x,x_0),t)}{\mathcal M_1(k_1d^2(x,x_0),0)\mathcal M_2^{N-3}(k_2d^2(x,x_0),0)}d\mu(x),
\]
where $\mathfrak D(k,t)=\sqrt{|k|}(1-t)$,
\[
\mathcal M_1(k,t)=\begin{cases}
\frac{2-2\cos(\mathfrak D(k,t))-\mathfrak D(k,t)\sin(\mathfrak D(k,t))}{\mathfrak D(k,t)^4} & \text{if } k>0\\
\frac{1}{12} & \text{if } k=0\\
\frac{2-2\cosh(\mathfrak D(k,t))+\mathfrak D(k,t)\sinh(\mathfrak D(k,t))}{\mathfrak D(k,t)^4} & \text{if } k<0,\\
\end{cases}
\]
and
\[
\mathcal M_2(k,t)=\begin{cases}
\frac{\sin(\mathfrak D(k,t))}{\mathfrak D(k,t)} & \text{if } k>0\\
1 & \text{if } k=0\\
\frac{\sinh(\mathfrak D(k,t))}{\mathfrak D(k,t)} & \text{if } k<0.
\end{cases}
\]
\end{defin}

Note, in particular, that $\hMCP 0 0 {N-1} N$ is the same as $\MCP 0 {N+2}$. If $k_1\geq 0$ and $k_2\geq 0$, then $\hMCP{k_1}{k_2}{N-1} N$ implies $\MCP 0 {N+2}$. The reason for the notations in the conditions $\mathcal{MCP}(k_1,k_2;N-1, N)$ is clarified by Theorem 1.1 below.

Next, we state a simple consequence of the main results. For this, let $M$ be a Sasakian manifold of dimension $2n+1$ equipped with a contact form $\alpha_0$ and a Riemannian metric $\left<\cdot,\cdot\right>$ (see the next section for a detail discussion of Sasakian manifold and the corresponding sub-Riemannian structure). Let $\mu$ be the corresponding Riemannian volume form and let $d_{CC}$ be the sub-Riemannian distance. Here the distribution is defined by $\Di=\ker\alpha_0$ and the sub-Riemannian metric is given by the restriction of the Riemannian one on $\Di$.

\begin{theor}\label{main}
Assume that the Tanaka-Webster curvature $\Rm^*$ of the Sasakian manifold $M$ of dimension $2n+1$ satisfies
\begin{equation}
\label{holomsect}
\left<\Rm^*(Jv,v)v,Jv\right>\geq k_1
\end{equation}
and
\begin{equation}
\label{SasRicci}
\sum_{i=1}^{2n-2}\left<\Rm^*(w_i,v)v,w_i\right>\geq (2n-2)k_2.
\end{equation}
for all unit tangent vectors $v$ in $\mathcal D$ and an orthonormal frame $\{w_i\}_{i=1}^{2n-2}$ of the space  $\mathcal D\cap\{v,Jv\}^\perp$.
Then the metric measure space $(M, d_{CC}, \mu)$ satisfies $\hMCP{k_1}{k_2}{2n}{2n+1}$, where $d_{CC}$ is the sub-Riemannian distance of $M$ and $\mu$ is the volume form with respect  to the Riemannian metric of the Sasakian manifold $M$.
\end{theor}

In particular, we recover the following result in \cite{Ju}.

\begin{theor}\cite{Ju}\label{cor1}
The Heisenberg group of dimension $2n+1$ equipped with the standard sub-Riemannian distance $d_{CC}$ and the Lebesgue measure $\mu$ satisfies $\hMCP{0}{0}{2n}{2n+1}=\MCP{0}{2n+3}$.
\end{theor}

We also have
\begin{theor}\label{cor2}
The complex Hopf fibration equipped with a natural sub-Riemannian distance $d_{CC}$ and measure $\mu$ (defined in Section \ref{subIntro}) satisfies the condition $\hMCP{4}{1}{2n}{2n+1}$. In particular, it satisfies $\MCP{0}{2n+3}$.
\end{theor}
and
\begin{theor}\label{cor3}
The anti de-Sitter space equipped with a natural sub-Riemannian distance $d_{CC}$ and measure $\mu$ (defined in Section \ref{subIntro}) satisfies the condition $\hMCP{-4}{-1}{2n}{2n+1}$.
\end{theor}

\begin{remark}\label{holomrem}
Note that the left hand side of \eqref{holomsect} is in fact the holomorphic sectional curvature of the K\"{a}hler structure obtained locally from the original Sasakian structure via the quotient by the transversal symmetry (for this point of view see our earlier preprint \cite{LLZ})  so that condition \eqref{holomsect} is in fact  a lower bound for the holomorphic sectional curvature of this K\"{a}hler structure. The inequality \eqref{SasRicci} plays a role of the  Sasakian Ricci curvature bound.
\end{remark}

\begin{remark}\label{k1k2}
Assume that the sectional curvature of the $2$-planes belonging to $\mathcal D$ of the Tanaka-Webster connection of the Sasakian manifold $M$  is bounded below by $k$. Then Theorem \ref{main} holds for $k_1=k_2=k$, i.e. the metric measure space $(M, d_{CC}, \mu)$ satisfies $\hMCP{k}{k}{2n}{2n+1}$.
\end{remark}

We also remark that the estimates for the proof of Theorem \ref{cor1}, \ref{cor2}, and \ref{cor3} are sharp (see Propositions \ref{maincor2}, \ref{maincor3}, and \ref{maincor4} for more detail).

Note that the condition $\MCP 0 N$ implies the volume doubling property of $\mu$ and a local Poincar\'e inequality  (see \cite{St1, St2, Oh1, AgLe1}, see also \cite{BaGa1,BaGa2, BaBoGa} for an alternative approach to the following results in the case $p=2$ with weaker assumptions).

\begin{cor}(Doubling)\label{double}
Assume that the assumption of Theorem \ref{main} holds with $k_1=k_2=0$. Then there is a constant $C>0$ such that
\[
 \mu(B_{x}(2R))\leq C\mu(B_{x}(R))
\]
for all $x$ in $M$ and all $R>0$, where $B_x(R)$ is the sub-Riemannian ball of radius $R$ centered at $x$.
\end{cor}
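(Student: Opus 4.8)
The plan is to reduce the statement to the well-known fact that a metric measure space satisfying a measure contraction property of the type $\MCP 0 N$ is automatically volume doubling, and then to run that argument explicitly in the present setting. First I would note that, by Theorem \ref{main} applied with $k_1=k_2=0$ and $N=2n+1$, together with the identity $\hMCP 0 0 {N-1} N=\MCP 0 {N+2}$ recorded right after Definition \ref{gMC}, the metric measure space $(M,d_{CC},\mu)$ satisfies $\MCP 0 {2n+3}$. Thus it suffices to establish the purely formal implication: if $(M,d,\mu)$ satisfies $\MCP 0 {\nu}$ for some $\nu>0$, then $\mu(B_x(2R))\le 2^{\nu}\mu(B_x(R))$ for all $x\in M$ and all $R>0$; this will give the corollary with $C=2^{2n+3}$.

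Next I would carry out the standard contraction argument. Fix $x\in M$ and $R>0$, and apply $\MCP 0 {2n+3}$ with base point $x_0=x$, Borel set $U=B_x(2R)$, and parameter $t=\tfrac12$. For $\mu$-almost every $y\in U$ the map $\varphi_{t}$ is defined and $s\mapsto\varphi_s(y)$ is the constant-speed minimizing geodesic from $y$ to $x$; in particular $d_{CC}(\varphi_{1/2}(y),x)=\tfrac12\,d_{CC}(y,x)<R$, so that $\varphi_{1/2}(U)\subseteq B_x(R)$ up to a $\mu$-null set. The defining inequality of $\MCP 0 {2n+3}$ then yields
\[
\mu\bigl(B_x(R)\bigr)\ \ge\ \mu\bigl(\varphi_{1/2}(U)\bigr)\ \ge\ \Bigl(1-\tfrac12\Bigr)^{2n+3}\mu(U)\ =\ 2^{-(2n+3)}\,\mu\bigl(B_x(2R)\bigr),
\]
which is exactly the desired estimate. (The choice $t=\tfrac12$ is the optimal one here: any $t\ge\tfrac12$ works, since then $\varphi_t(B_x(2R))\subseteq B_x(2(1-t)R)\subseteq B_x(R)$, but $t=\tfrac12$ minimizes the resulting constant $(1-t)^{-(2n+3)}$.)

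The only points that need care — and they constitute the (mild) obstacle — are measure-theoretic bookkeeping: one must know that $\varphi_t$ is a Borel map defined $\mu$-almost everywhere, that $\varphi_{1/2}(U)$ is $\mu$-measurable, and that discarding $\mu$-null sets does not affect the inequalities above. The first of these holds in our situation because, by the blanket assumption on $M$ recalled before Definition \ref{gMC} and verified for contact sub-Riemannian metrics in Lemma \ref{clar1.3}, the set of points not joined to $x_0$ by a unique length-minimizing geodesic has $\mu$-measure zero; the remaining points are routine and are already built into the formulation of $\MCP 0 N$ and into the arguments of \cite{St1,St2,Oh1}. No new estimate is required: the corollary is a formal consequence of Theorem \ref{main}.
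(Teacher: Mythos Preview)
Your proposal is correct and matches the paper's approach: the paper does not give an explicit proof of the doubling corollary but simply observes, with references to \cite{St1,St2,Oh1,AgLe1}, that $\MCP 0 N$ implies volume doubling, so the corollary follows from Theorem \ref{main} together with $\hMCP 0 0 {N-1} N=\MCP 0 {N+2}$. You have merely spelled out the standard contraction-to-midpoint argument behind that citation, which is fine and yields the explicit constant $C=2^{2n+3}$.
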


\begin{cor}(Poincar\'e inequality)\label{Poincare}
Assume that the assumption of Theorem \ref{main} holds with $k_1=k_2=0$. Then, for each $p>1$, there is a constant $C>0$ such that
\[
\begin{split}
&\int_{B_{x}(R)}|f(x)-\left<f\right>_{B_{x}(R)}|^pd\vol(x) \\
&\leq CR^p\int_{B_{x}(R)}|\nabla_{\hor}f|^pd\vol(x),
\end{split}
\]
where $\left<f\right>_{B_{x}(R)}=\frac{1}{\vol({B_{x}(R)})}\int_{B_{x}(R)}f(x)d\vol(x)$ and $\nabla_{\hor} f$ is the horizontal gradient of $f$ which is the projection of the Riemannian gradient onto the distribution $\mathcal D$.
\end{cor}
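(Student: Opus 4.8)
The plan is to deduce the inequality from the measure contraction property alone. Applying Theorem \ref{main} with $k_1=k_2=0$ and using the identity $\hMCP 0 0 {2n}{2n+1}=\MCP 0 {2n+3}$ noted after Definition \ref{gMC}, the metric measure space $(M,d_{CC},\mu)$ satisfies $\MCP 0 N$ with $N:=2n+3$. From here one runs the by-now standard argument deriving a Poincar\'e inequality from a measure contraction property (\cite{St1,St2,Oh1}; in the sub-Riemannian contact setting \cite{AgLe1}), so I only sketch its skeleton and indicate where the hypotheses enter.

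First I would reduce the left-hand side, via Jensen's inequality applied to $f(x)-\left<f\right>_{B_x(R)}=\frac{1}{\mu(B_x(R))}\int_{B_x(R)}(f(x)-f(z))\,d\mu(z)$, to the double integral $\frac{1}{\mu(B_x(R))}\int_{B_x(R)}\int_{B_x(R)}|f(y)-f(z)|^p\,d\mu(y)\,d\mu(z)$. For $\mu$-a.e.\ pair $(y,z)$ there is a unique constant-speed minimizing sub-Riemannian geodesic $\gamma_{y,z}\colon[0,1]\to M$ from $y$ to $z$: this is exactly where the absence of abnormal minimizers for contact structures is used, through \cite{CaRi} (cf.\ Lemma \ref{clar1.3}), and it is what makes the ensuing change of variables legitimate. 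Since $\dot\gamma_{y,z}$ is horizontal with sub-Riemannian norm $d_{CC}(y,z)\le 2R$, one has $|f(y)-f(z)|^p\le (2R)^{p-1}\int_0^1|\nabla_{\hor}f(\gamma_{y,z}(t))|^p\,dt$.

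The crucial step is a change of variables in which the geodesic contraction maps $\varphi_t$ reappear with a \emph{variable} base point. For $t\le\frac12$, fix $z$ and regard $y\mapsto\gamma_{y,z}(t)$ as the contraction $\varphi_t$ with base point $x_0=z$; for $t\ge\frac12$, fix $y$ and regard $z\mapsto\gamma_{y,z}(t)$ as the contraction $\varphi_{1-t}$ with base point $x_0=y$. In either regime $\MCP 0 N$ forces the Jacobian of the contraction at time $s$ to be at least $(1-s)^N$, so the inverse Jacobian is bounded by $2^N$ on the corresponding half of $[0,1]$; moreover the image of $B_x(R)$ under these contractions lies in $B_x(2R)$. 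Splitting $\int_0^1=\int_0^{1/2}+\int_{1/2}^1$, substituting, and feeding the result back into the double integral gives the weak Poincar\'e inequality $\int_{B_x(R)}|f-\left<f\right>_{B_x(R)}|^p\,d\vol\le C(n,p)\,R^p\int_{B_x(2R)}|\nabla_{\hor}f|^p\,d\vol$; the splitting is essential, since carrying any single endpoint all the way to the other produces the divergent factor $\int_0^1(1-t)^{-N}\,dt$.

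Finally, to replace $B_x(2R)$ by $B_x(R)$ on the right as stated, I would invoke the self-improvement, on a doubling space, of a weak $(1,p)$-Poincar\'e inequality to the strong form; the doubling property is Corollary \ref{double} (also under $k_1=k_2=0$), and the passage is the usual telescoping/chaining over a Whitney-type cover. The argument is carried out first for locally Lipschitz $f$ and then extended to the horizontal Sobolev class by density. I expect the one genuinely delicate point, beyond bookkeeping, to be the rigorous justification of the change of variables: one must know that the contraction maps $\varphi_t$ (for a.e.\ choice of base point) are Borel, essentially injective on a full-measure set, and possess a measurable Jacobian from which the pointwise bound $\ge(1-t)^N$ may be extracted from the set-function inequality defining $\MCP 0 N$. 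For contact sub-Riemannian metrics all of this follows from the structure of minimizers recalled earlier; the non-compactness of the sub-Riemannian spheres, which is the source of most of the technical work elsewhere in the paper, does not intervene here, the measure contraction estimate having already been secured.
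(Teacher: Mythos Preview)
Your argument is correct and follows the same overall scheme as the paper's Appendix: derive a weak $(p,p)$-Poincar\'e inequality on $B_x(2R)$ from $\MCP 0 N$ by writing $f(y)-f(z)$ as an integral of $\nabla_{\hor}f$ along the a.e.\ unique minimizing geodesic, changing variables via the contraction maps $\varphi_t$, and then upgrading to the strong inequality on $B_x(R)$ using doubling (Corollary \ref{double}).

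There is, however, a genuine organizational difference worth noting. The paper fixes a single base point $x'$ and integrates $\int_0^1\ldots\,dt$ without splitting, bounding $f(x')-\langle f\rangle$ before taking the $p$-th power and only then applying Jensen; the passage from $d(\varphi_t)_*\eta$ to $C\,d\eta$ is not spelled out and is deferred to \cite{LoVi2}, and the weak-to-strong step is handed off to \cite{Je}. Your version applies Jensen first, works symmetrically in the two endpoints, and explicitly splits $\int_0^1=\int_0^{1/2}+\int_{1/2}^1$, using $z$ as base point on the first half and $y$ on the second; this is exactly what keeps the inverse Jacobian bounded by $2^N$ and sidesteps the divergence $\int_0^1(1-t)^{-N}\,dt=\infty$ that you flag. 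Your route is the one in \cite{St2,Oh1} and is more self-contained on this point; the paper's route is terser but leans more heavily on the cited references. Either way the conclusion is the same weak Poincar\'e plus doubling self-improvement.
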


Let $\di_\eta$ be the divergence with respect to the volume form $\eta$. By combining Corollary \ref{double} and \ref{Poincare} with the results in \cite{CoHoSa}, we also obtain

\begin{cor}(Harnack inequality)
Assume that the assumption of Theorem \ref{main} holds with $k_1=k_2=0$. Then, for each $p>1$, there is a constant $C>0$ such that any positive solution to the equation $\di_\eta(|\nabla_{\hor}f|^{p-2}\nabla_{\hor}f)=0$ on $B_x(R)$ satisfies
\[
\sup_{B_{x}(R/2)}f\leq C\inf_{B_{x}(R/2)}f.
\]
\end{cor}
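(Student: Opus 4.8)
The plan is to obtain the Harnack inequality as a direct consequence of the two structural properties already in hand, via the Moser-iteration machinery for quasilinear subelliptic equations developed in \cite{CoHoSa}. First I would record the elementary setup: since $d\alpha_0$ is non-degenerate on $\Di=\ker\alpha_0$, the distribution is bracket-generating of step two, so $d_{CC}$ is finite, induces the manifold topology, and locally $\Di$ is spanned by a smooth frame $X_1,\dots,X_{2n}$ satisfying H\"ormander's condition. Completeness of $(M,d_{CC})$ follows from completeness of the underlying Riemannian metric (equivalently from properness of the exponential map used throughout the paper), and since every horizontal curve is in particular a curve one has $d_{CC}\ge d_{\mathrm{Riem}}$, hence $B_x(R)\subseteq B^{\mathrm{Riem}}_x(R)$ is relatively compact; thus $(M,d_{CC},\mu)$ is a complete, locally compact length (Carnot--Carath\'eodory) space. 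I would then observe that, writing $\rho=\eta/\mu$ (a smooth positive function, bounded above and below on each ball), the equation $\di_\eta(|\nabla_{\hor}f|^{p-2}\nabla_{\hor}f)=0$ is equivalent to $\di_\mu(\rho\,|\nabla_{\hor}f|^{p-2}\nabla_{\hor}f)=0$, i.e. to $\di_\mu(\mathcal A(x,\nabla_{\hor}f))=0$ with $\mathcal A(x,\xi)=\rho(x)|\xi|^{p-2}\xi$ satisfying the standard $p$-ellipticity and growth bounds (with constants depending on $\sup_B\rho$, $\inf_B\rho$) relative to the vector fields $X_i$ — precisely the class of operators treated in \cite{CoHoSa}, with weak solutions understood in the local horizontal Sobolev sense.

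Next I would invoke the main theorem of \cite{CoHoSa}: on a complete metric measure space whose measure is locally doubling and which supports a weak local $(1,p)$-Poincar\'e inequality, every nonnegative weak solution of such an equation on a ball $B_x(R)$ satisfies the scale-invariant elliptic Harnack inequality $\sup_{B_x(R/2)}f\le C\inf_{B_x(R/2)}f$, with $C$ depending only on $p$, the doubling constant and the Poincar\'e constant. Under the hypothesis $k_1=k_2=0$ in Theorem \ref{main}, Corollary \ref{double} gives global (hence local) doubling of $\mu$ on $(M,d_{CC})$, and Corollary \ref{Poincare} gives the strong $(p,p)$-Poincar\'e inequality on balls; by H\"older's inequality the latter implies the weak $(1,p)$-Poincar\'e inequality on each ball with the same constant. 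Feeding these facts, together with the completeness and local-compactness noted above, into the cited theorem yields the claim, with $C$ independent of $x$ and $R$ because the doubling and Poincar\'e constants are uniform. No further estimate, and in particular no new use of the Tanaka--Webster curvature bounds, is needed beyond what already entered Corollaries \ref{double} and \ref{Poincare}.

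The only genuinely delicate point — and the step I expect to be the main obstacle — is matching the hypotheses of \cite{CoHoSa} to the present formulation verbatim: one must check that their Poincar\'e requirement is indeed the (weaker) $(1,p)$-version implied by our $(p,p)$-Poincar\'e, that they ask only for \emph{local} doubling and \emph{local} Poincar\'e on balls up to a fixed radius rather than some uniform global variant, and that their admissible class of operators is broad enough to include $\di_\eta(|\nabla_{\hor}f|^{p-2}\nabla_{\hor}f)$ for an arbitrary fixed smooth volume $\eta$ (which it is, since passing from $\mu$ to $\eta$ only multiplies the structural coefficient $\rho$, bounded between positive constants on each ball, and thus changes the final constant $C$ by a controlled factor). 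Once this dictionary between the two settings is made explicit, the corollary follows immediately.
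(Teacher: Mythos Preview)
Your proposal is correct and follows exactly the paper's approach: the paper does not give a standalone proof of this corollary but simply states that it is obtained ``by combining Corollary \ref{double} and \ref{Poincare} with the results in \cite{CoHoSa}'', which is precisely the route you outline. Your additional remarks on matching the hypotheses of \cite{CoHoSa} (the $(1,p)$ vs.\ $(p,p)$ Poincar\'e, the role of the weight $\rho=\eta/\mu$, completeness and local compactness) are a reasonable fleshing-out of that one-line citation and do not deviate from it.
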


\begin{cor}(Liouville theorem)
Assume that the assumption of Theorem \ref{main} holds with $k_1=k_2=0$. Then any non-negative solution to the equation $\di_\eta(|\nabla_{\hor}f|^{p-2}\nabla_{\hor}f)=0$ on $M$ is a constant.
\end{cor}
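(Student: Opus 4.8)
The plan is to derive this from the scale‑invariant Harnack inequality of the preceding Corollary by the classical Moser‑type Liouville argument. First I would reduce to the case $\inf_M f=0$: since $f\geq 0$, the number $m:=\inf_M f$ is finite and non‑negative, and $f-m$ is again a non‑negative solution of $\di_\eta(|\nabla_{\hor}(f-m)|^{p-2}\nabla_{\hor}(f-m))=0$ because $\nabla_{\hor}(f-m)=\nabla_{\hor}f$; hence we may assume $\inf_M f=0$. For $\eps>0$ the function $f+\eps$ is a strictly positive solution of the same equation on every sub‑Riemannian ball $B_x(R)$ (again $\nabla_{\hor}(f+\eps)=\nabla_{\hor}f$), so the Harnack inequality applies and gives
\[
\sup_{B_{x}(R/2)}f+\eps\leq C\Bigl(\inf_{B_{x}(R/2)}f+\eps\Bigr),
\]
with $C$ depending only on $p$, and in particular not on $x$, $R$, or $\eps$. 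The uniformity in $R$ is the essential point: under the hypothesis $k_1=k_2=0$, Theorem \ref{main} yields $\MCP{0}{2n+3}$, which produces the volume doubling property (Corollary \ref{double}) and the local Poincar\'e inequality (Corollary \ref{Poincare}) with constants valid at all scales, and these feed into the results of \cite{CoHoSa} to give a genuinely scale‑invariant Harnack estimate.

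Letting $\eps\to 0^{+}$ in the displayed inequality gives $\sup_{B_{x}(R/2)}f\leq C\inf_{B_{x}(R/2)}f$ for every $x\in M$ and every $R>0$ (both sides are finite since $p$‑harmonic functions are continuous and $\overline{B_x(R/2)}$ is compact, $M$ being complete). Now fix $x$ and let $R\to\infty$. Since $M$ is connected and complete, the balls $B_{x}(R/2)$ exhaust $M$, so $\inf_{B_{x}(R/2)}f\to\inf_M f=0$, while for any fixed $R_0$ one has the monotonicity $\sup_{B_{x}(R_0/2)}f\leq\sup_{B_{x}(R/2)}f$ for $R\geq R_0$. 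Combining, $\sup_{B_{x}(R_0/2)}f\leq C\inf_{B_{x}(R/2)}f\to 0$, hence $f\equiv 0$ on $B_{x}(R_0/2)$; as $R_0$ is arbitrary, $f\equiv 0$ on $M$. Returning to the original $f$, this means $f$ is constant.

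The only genuine obstacle is the one already isolated: one must ensure the Harnack constant does not deteriorate as the radius grows. This is not an extra difficulty here, precisely because the doubling and Poincar\'e inequalities coming from $\MCP{0}{2n+3}$ are uniform over all sub‑Riemannian balls; the remaining ingredients — that adding a constant preserves the equation, that $B_x(R)$ exhausts $M$, and the monotonicity of $\sup$ and $\inf$ over nested balls — are routine. (If $M$ were compact the same argument applies verbatim, the infimum then being attained.)
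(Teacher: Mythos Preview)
Your argument is correct and is essentially the route the paper takes. The paper gives no explicit proof of this corollary: it simply declares that both the Harnack inequality and the Liouville theorem follow by combining the doubling property (Corollary~\ref{double}) and the Poincar\'e inequality (Corollary~\ref{Poincare}) with the results of \cite{CoHoSa}. What you have written is precisely the standard deduction of Liouville from a scale-invariant Harnack inequality (subtract the infimum, apply Harnack on balls of growing radius, use that the constant is independent of $R$), which is exactly how \cite{CoHoSa} obtains it; so you have just made explicit what the paper leaves to the citation.
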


For other consequences of Corollary \ref{double} and \ref{Poincare}, see \cite{CoHoSa}.

In the earlier longer versions of this paper (\cite{LLZ}), we prove the same results
for contact sub-Riemannain manifolds with a transversal symmetry, which are more general than the Sasakian one.
However, the formulation of the results in that more general cases are more cumbersome, while the proofs are practically the same. Therefore we decided to
omit this more general cases for the sake of more short and concise presentation. Besides, in the earlier version, even in the case of Sasakian structures,
we formulated the results in terms of the invariants of the K\"{a}hler structure obtained from the original Sasakian structure via the quotient by the
transversal symmetries, assuming that such quotient is globally defined. Here we use the calculations in \cite{LeLi} appeared later than the earlier version \cite{LLZ} to
rewrite the results in terms of the Tanaka-Webster curvature tensor of the Sasakian manifold itself. First, in this way the formulation becomes more compact
and natural. Second, the previous assumption on the global existence of the quotient manifold can be removed.


We also remark that, unlike the Riemannian case, Bishop volume comparison theorem and measure contraction properties are very different in the sub-Riemannian case. This is because any sub-Riemannian ball contains cut points.

In the next section, some basic notions in sub-Riemannian geometry will be recalled and the main results of the paper will be stated. The rest of the sections will be devoted to the proof of the main results.

\smallskip

\section{Sub-Riemannian Structures on Sasakian Manifolds}\label{subIntro}
\setcounter{equation}{0} \setcounter{theor}{0}
\setcounter{lemma}{0} \setcounter{prop}{0}

In this section, we recall various notions on sub-Riemannian structures of Sasakian manifolds which are needed. A sub-Riemannian manifold is a triple $(M,\Di,\left<\cdot,\cdot\right>)$, where $M$ is a manifold of dimension $N$, $\Di$ is a sub-bundle of the tangent bundle $TM$, and $\left<\cdot,\cdot\right>$ is a smoothly varying inner product defined on $\mathcal D$. The sub-bundle $\Di$ and the inner product $\left<\cdot,\cdot\right>$ are commonly known as a distribution and a sub-Riemannian metric, respectively. A Lipshitzian curve $\gamma(\cdot)$ is horizontal if $\dot\gamma(t)$ is contained in $\Di$ for almost every $t$. The length $l(\gamma)$ of a horizontal curve $\gamma$ can be defined as in the Riemannian case:
\[
l(\gamma)=\int_0^1|\dot\gamma(t)|dt.
\]

Assume that the distribution $\Di$ satisfies the following bracket generating or H\"ormander condition: the sections of $\Di$ and their iterated Lie brackets span each tangent space. Under this assumption and that the manifold $M$ is connected, the Chow-Rashevskii Theorem (see \cite{Mo}) guarantees that any two given points on the manifold $M$ can be connected by a horizontal curve. Therefore, we can define the sub-Riemannian (or Carnot-Carath\'eordory) distance $d_{CC}$ as
\begin{equation}\label{SRd}
d_{CC}(x_0,x_1)=\inf_{\gamma\in\Gamma}l(\gamma),
\end{equation}
where the infimum is taken over the set $\Gamma$ of all horizontal paths $\gamma:[0,1]\to M$ which connect $x_0$ with $x_1$: $\gamma(0)=x_0$ and $\gamma(1)=x_1$. The minimizers of (\ref{SRd}) are called length minimizing geodesics (or simply geodesics). As in the Riemannian case, reparametrizations of a geodesic is also a geodesic. Therefore, we assume that all geodesics have constant speed.

In this paper, we will focus on Sasakian manifolds equipped with a natural sub-Riemannian structure. First, we recall that a distribution $\Di$ is contact if it is given by the kernel of a 1-form $\alpha_0$, called a contact form, defined by the condition that the restriction of $d\alpha_0$ to $\Di$ is non-degenerate. A Sasakian manifold $M$ is a contact manifold equipped with a contact form $\alpha_0$, a vector field $v_0$, a $(1,1)$-tensor $J$, and a Riemannian metric $\left<\cdot,\cdot\right>$ such that
\begin{eqnarray}
&~&  \alpha_0(v_0)=1,\label{Sas1}\\
&~& J^2v=-v+\alpha_0(v)v_0,\label{Sas2}\\
&~& d\alpha_0(v,w)=\left<v,Jw\right>,\label{Sas3}
\end{eqnarray}
and the Nijenhuis tensor
\[
[J,J](v,w):=J^2[v,w]+[Jv,Jw]-J[Jv,w]-J[v,Jw]
\]
satisfies
\[
[J,J]=-d\alpha_0(v,w)v_0.
\]
The sub-Riemannian metric is defined simply by restricting the Riemannian one on the distribution $\mathcal D:=\ker\alpha_0$.

It follows from \cite[Proposition on p.20]{Blair} that conditions \eqref{Sas1} and \eqref{Sas2} imply
\begin{equation}
\label{Jv0}
Jv_0=0,
\end{equation}
each fiber of $\mathcal D$ is invariant with respect to $J$, and $J$ defines a complex structure on these fibers. Moreover, substituting  $w=v_0$ into \eqref{Sas3} and using \eqref{Jv0}, one gets that $v_0$ is contained in the kernel of $d\alpha_0$. This together with \eqref{Sas1} means that $v_0$ is the Reeb field of the contact form $\alpha_0$. Besides, putting $v=v_0$, using the fact that $v_0$ is contained in the kernel of $d\alpha_0$,
and that $J|_\Di$ is an isomorphism of $\mathcal D$, we get that $v_0$ is orthogonal to $\mathcal D$ with respect to the Riemannian metric $\left<\cdot,\cdot\right>$.

The Tanaka-Webster connection $\nabla^*$ is defined by
\[
\nabla_X^*Y=\nabla_XY+\frac{1}{2}\alpha_0(X)JY-\alpha_0(Y)\nabla_Xv_0+\nabla_X\alpha_0(Y)v_0,
\]
where $\nabla$ is the Levi-Civita connection of the Riemannian metric $\left<\cdot,\cdot\right>$. The corresponding curvature $\Rm^*$ is called Tanaka-Webster curvature
\[
\Rm^*(X,Y)Z=\nabla_X^*\nabla_Y^*Z-\nabla_Y^*\nabla_X^*Z-\nabla_{[X,Y]}^*Z.
\]

The simplest example of Sasakian manifold is the Heisenberg group. In this case, the manifold $M$ is the $(2n+1)$-dimensional Euclidean space $\Real^{2n+1}$ with coordinates $x_1,...,x_n,y_1,...,y_n,z$. The vector field $v_0$ is $v_0=\partial_z$, and the contact form $\alpha_0$ is $\alpha_0=dz-\frac{1}{2}\sum_{i=1}^nx_idy_i+\frac{1}{2}\sum_{i=1}^ny_idx_i$. Let
\[
X_i=\partial_{x_i}-\frac{1}{2}y_i\partial_z,\quad Y_i=\partial_{y_i}+\frac{1}{2}x_i\partial_z,
\]
where $i=1,...,n$.
The Riemannian metric is defined in such a way that $\{X_1,...,X_n,Y_1,...,Y_n,\partial_z\}$ is orthonormal and the tensor $J$ satisfies $J(X_i)=Y_i$, $J(Y_i)=-X_i$, $J(\partial_z)=0$. Finally, the Tanaka-Webster curvature $\Rm^*=0$ on $\mathcal D$ in this case. Remark that the Riemannian volume in this case coincides with the $(2n+1)$-dimensional Lebesgue measure $\mathcal L^{2n+1}$. The following three propositions are consequences of the \emph{proof} of Theorem \ref{main}.

\begin{prop}\label{maincor2}
The Heisenberg group satisfies
\[
\mathcal L^{2n+1}(\varphi_t(U))= \int_{U}\frac{(1-t)\mathcal M_1(\mathbf{k_1}(x),t)\mathcal M_2^{2n-2}(\mathbf{k_2}(x),t)}{\mathcal M_1(\mathbf{k_1}(x),0)\mathcal M_2^{2n-2}(\mathbf{k_2}(x),0)}d\mathcal L^{2n+1}(x)
\]
for any Borel set $U$, where
\[
\begin{split}
&\mathbf{k_1}(x)=(v_0\mathfrak f(x))^2, \quad \mathbf{k_2}(x)=\frac{(v_0\mathfrak f(x))^2}{4},\quad  \mathfrak f(\x)=-\frac{1}{2}d^2_{CC}(\x,\x_0).
\end{split}
\]
In particular, the metric measure space $(\mathbb{H}, d_{CC}, \mathcal L^{2n+1})$ satisfies the condition $\hMCP{0}{0}{2n}{2n+1}=\MCP{0}{2n+3}$.
\end{prop}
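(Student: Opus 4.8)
The plan is to obtain the asserted identity as the \emph{exact} version of the estimate proved in Theorem~\ref{main}, specialized to the flat model. Recall that, after the change of variables $y=\varphi_t(x)$, one has $\mathcal L^{2n+1}(\varphi_t(U))=\int_U|\det D\varphi_t(x)|\,d\mathcal L^{2n+1}(x)$, and that $|\det D\varphi_t(x)|$ is controlled by the Jacobi curve of the normal extremal joining $x$ to $\mathbf x_0$: passing to the canonical moving frame of \cite{LiZe1,LiZe2} it is governed by the matrix Riccati equation attached to the structure equations (Lemma~\ref{keylem} and Section~\ref{Ricsec}), whose coefficients are the symplectic invariants of the curve, built from the Tanaka--Webster curvature $\Rm^*$ via \cite{LeLi}. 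On the Heisenberg group $\Rm^*\equiv0$ on $\mathcal D$, so these coefficients are constant; the decoupled inequalities obtained by taking traces of the relevant blocks (as in Lemma~\ref{Riccati2}) become decoupled \emph{equalities}, and the constant-coefficient Riccati equation can be solved in closed form. Hence the one-sided bound of Theorem~\ref{main} is replaced by an exact expression for $|\det D\varphi_t(x)|$.

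I would then carry out the following steps. First, observe that the only parameter entering the constant-coefficient equation along the extremal is its momentum $h_0$ in the Reeb direction $v_0$, which is a first integral of the Hamiltonian flow and which, along the extremal reaching $\mathbf x_0$, equals $v_0\mathfrak f(x)$ up to sign, where $\mathfrak f(\mathbf x)=-\tfrac12 d_{CC}^2(\mathbf x,\mathbf x_0)$ (this is the identity that $d\bigl(\tfrac12 d_{CC}^2(\cdot,\mathbf x_0)\bigr)$ is the terminal covector of the minimizing extremal). Second, with $\Rm^*\equiv0$ the Jacobi/Riccati system decouples into three pieces corresponding to $T_xM=\mathbb R v\oplus\mathbb R Jv\oplus(\mathcal D\cap\{v,Jv\}^\perp)\oplus\mathbb R v_0$: the $\mathbb R v$ direction (reparametrization of the extremal) contributes the factor $(1-t)$; the two-plane $\mathbb R Jv\oplus\mathbb R v_0$, on which the contact structure forces a fourth-order scalar equation with characteristic quantity $|h_0|$, contributes $\mathcal M_1\bigl(h_0^2,t\bigr)/\mathcal M_1\bigl(h_0^2,0\bigr)$; and the $(2n-2)$ ``neutral'' directions, each solving a second-order equation with characteristic quantity $|h_0|/2$, contribute $\mathcal M_2^{2n-2}\bigl(h_0^2/4,t\bigr)/\mathcal M_2^{2n-2}\bigl(h_0^2/4,0\bigr)$ --- indeed, the primitives $2-2\cos s-s\sin s$ (the doubly integrated $s\sin s$) and $\sin s/s$ appearing in $\mathcal M_1,\mathcal M_2$ are exactly the fundamental solutions of these scalar equations, with $s=|h_0|(1-t)$, resp. $s=|h_0|(1-t)/2$. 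Substituting $h_0=v_0\mathfrak f(x)$ gives $\mathbf k_1(x)=(v_0\mathfrak f(x))^2$ and $\mathbf k_2(x)=(v_0\mathfrak f(x))^2/4$, and multiplying the three factors yields the integrand; integrating over $U$ gives the identity. The ``in particular'' statement then follows from the observation recorded after Definition~\ref{gMC}: since $\mathbf k_1(x),\mathbf k_2(x)\ge0$, the pointwise inequality $(1-t)\,\mathcal M_1(\mathbf k_1(x),t)\,\mathcal M_2^{2n-2}(\mathbf k_2(x),t)\ge(1-t)^{2n+3}\,\mathcal M_1(\mathbf k_1(x),0)\,\mathcal M_2^{2n-2}(\mathbf k_2(x),0)$ holds, i.e.\ the exact Heisenberg Jacobian dominates $(1-t)^{2n+3}$; this reduces to the elementary monotonicity of $s\mapsto s^{-8}(2-2\cos s-s\sin s)$ and of $s\mapsto s^{-2}\sin s$ on the interval where they are positive, together with the cut-locus bound $|v_0\mathfrak f(x)|\le 2\pi$ that keeps the arguments in that range.

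The main obstacle I expect is the exact bookkeeping of the second step: pinning down the boundary condition for the Jacobi (equivalently Riccati) system that corresponds to the contraction $\varphi_t$ rather than to the forward exponential map, correctly accounting for the single residual factor $(1-t)$ coming from the non-Riemannian contact direction, and --- most delicate --- matching the explicit trigonometric solution of the constant-coefficient Riccati equation with the precise normalizations built into $\mathcal M_1$ and $\mathcal M_2$, including why the $\mathbb R Jv\oplus\mathbb R v_0$ block produces fourth-order behaviour with quantity $h_0^2$ while the neutral directions produce second-order behaviour with quantity $h_0^2/4$. It is exactly here that the homogeneity of the Heisenberg group enters: it is what turns the Riccati comparison of Theorem~\ref{main} into an identity, so that, as the introduction emphasizes, the passage from equations to decoupled inequalities loses nothing for the homogeneous models.
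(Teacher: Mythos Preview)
Your plan coincides with the paper's proof, which is the single remark (just before the proof of Theorem~\ref{main}) that Propositions~\ref{maincor2}--\ref{maincor4} follow from the equality case of Lemma~\ref{Riccati1}: since $\Rm^*\equiv0$ on the Heisenberg group, Theorem~\ref{compute} gives $R_{bb}(t)=(v_0\mathfrak f)^2$, $R_{cc}(t)=\tfrac14(v_0\mathfrak f)^2\,I_{2n-2}$ and $R_{cb}(t)=0$, which is precisely the equality hypothesis of Lemma~\ref{Riccati1}; combined with Lemma~\ref{keylem} this yields the exact integral formula, and the ``in particular'' follows because $\mathbf k_1,\mathbf k_2\ge0$.

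The obstacle you flagged is real, and your block-by-block attribution is off by powers of $(1-t)$. Integrating $\tr(C_2\Gamma)$ from the explicit $\Lambda(t)$ in the proof of Lemma~\ref{Riccati1}, the $Jv\oplus v_0$ block contributes $(1-t)^4\,\mathcal M_1(\mathfrak k_1,t)/\mathcal M_1(\mathfrak k_1,0)$ (not the bare ratio), each neutral direction contributes $(1-t)\,\mathcal M_2(\mathfrak k_2,t)/\mathcal M_2(\mathfrak k_2,0)$, and the $\p^h$-direction contributes $(1-t)$, for a total prefactor $(1-t)^{2n+3}$ --- in agreement with Lemma~\ref{Riccati1} and Definition~\ref{gMC}. (The printed $(1-t)$ in the statement appears to be a misprint: for $v_0\mathfrak f=0$ the Jacobian must reduce to $(1-t)^{2n+3}$, not $(1-t)$.) With the corrected exponent your monotonicity functions become $s\mapsto s^{-4}(2-2\cos s-s\sin s)$ and $s\mapsto s^{-1}\sin s$ rather than $s^{-8}$ and $s^{-2}$, and the cut-locus bound $|v_0\mathfrak f|<2\pi$ is indeed exactly what keeps both arguments in the interval on which these functions are positive and decreasing.
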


Another example which is relevant for us is the complex Hopf fibration. In this case, the manifold $M$ is the sphere $S^{2n+1}=\{z\in\Complex^{n+1}||z|=1\}$. The vector field $v_0$ is given by $v_0=2\sum_{i=1}^n(-y_i\partial_{x_i}+x_i\partial_{y_i})$ and the 1-form is $\alpha_0=\frac{1}{2}\sum_{i=1}^n(x_idy_i-y_idx_i)$. The Riemannian metric coincides with the Euclidean one on $\mathcal D$ and $J$ coincides with standard complex structure on $\mathcal D$. The Tanaka-Webster curvature $\Rm^*$, in this case, satisfies
\[
\left<\Rm^*(JX,X)X,JX\right>=4|X|^4,\quad \left<\Rm^*(Y,X)X,Y\right>=|X|^2|Y|^2
\]
for all $X$ in $\mathcal D$ and $Y$ in $\{X,JX\}^\perp\cap\mathcal D$ (see \cite{Sa}). Let $\mu$ be the Riemannian volume.

\begin{prop}\label{maincor3}
The complex Hopf fibration satisfies
\[
\mu(\varphi_t(U))= \int_{U}\frac{(1-t)\mathcal M_1(\mathbf{k_1}(x),t)\mathcal M_2^{2n-2}(\mathbf{k_2}(x),t)}{\mathcal M_1(\mathbf{k_1}(x),0)\mathcal M_2^{2n-2}(\mathbf{k_2}(x),0)}d\mu(x)
\]
for any Borel set $U$, where
\[
\begin{split}
&\mathbf{k_1}(x)=8|\mathfrak f(x)|+(v_0\mathfrak f(x))^2, \quad\mathbf{k_2}(x)=2|\mathfrak f(x)|+\frac{(v_0 \mathfrak f(x))^2}{4},\\
&\mathfrak f(\x)=-\frac{1}{2}d^2_{CC}(\x,\x_0).
\end{split}
\]
In particular, the metric measure space $(M, d_{CC}, \mu)$ satisfies the condition $\hMCP{4}{1}{2n}{2n+1}$ and hence $\MCP 0{2n+3}$.
\end{prop}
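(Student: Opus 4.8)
The plan is to obtain Proposition \ref{maincor3} as a by-product of the proof of Theorem \ref{main}, exploiting that the complex Hopf fibration is a sub-Riemannian space form. By the displayed curvature identities (see \cite{Sa}), along any unit horizontal vector $v$ one has $\langle\Rm^*(Jv,v)v,Jv\rangle=4$ and $\langle\Rm^*(w,v)v,w\rangle=|w|^2$ for every $w\in\{v,Jv\}^\perp\cap\Di$, so the part of the Tanaka--Webster curvature operator entering the Jacobi equation is scalar: it is the multiple $4$ of the identity on the holomorphic line $\mathbb R Jv$ and the multiple $1$ of the identity on $\{v,Jv\}^\perp\cap\Di$. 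First I would recall, from the Jacobi-curve framework (Lemma \ref{keylem} and section \ref{Ricsec}), that for a point $x$ in the full-measure set where the minimizer to $x_0$ is unique (Lemma \ref{clar1.3}) the Jacobian of $\varphi_t$ at $x$ is computed from the canonical moving frame of the Jacobi curve of that minimizer, and that along any extremal this Jacobian factors according to the splitting $\mathbb R v_0\oplus\mathbb R Jv\oplus(\{v,Jv\}^\perp\cap\Di)$ underlying the block form of the matrix Riccati equation of section \ref{Ricsec} used in the proof of Theorem \ref{main}: the Reeb line contributes a factor $(1-t)$, the holomorphic line a factor built from $\mathcal M_1$, and the $(2n-2)$-dimensional complement a factor built from $\mathcal M_2$.

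The next step is to integrate the relevant Riccati equations explicitly. On the Hopf fibration the coefficients of the Riccati equation along a fixed extremal are constant: they depend only on the two curvature eigenvalues above and on the value of the Reeb momentum of the extremal, which at the endpoint $x$ equals $v_0\mathfrak f(x)$ with $\mathfrak f(\x)=-\tfrac12 d^2_{CC}(\x,\x_0)$. Because the curvature operator is scalar on each block, the Riccati solution is itself a scalar matrix on each block, so the differential \emph{inequalities} produced by the trace decoupling in the proof of Theorem \ref{main} (as in the proof of Lemma \ref{Riccati2}) become \emph{equalities} here. Solving the two resulting scalar Riccati ODEs in closed form yields exactly the functions $\mathcal M_1$ and $\mathcal M_2$ of Definition \ref{gMC}, evaluated respectively at $\mathbf{k_1}(x)=8|\mathfrak f(x)|+(v_0\mathfrak f(x))^2$ and $\mathbf{k_2}(x)=2|\mathfrak f(x)|+\tfrac14(v_0\mathfrak f(x))^2$: the terms $8|\mathfrak f|$ and $2|\mathfrak f|$ carry the curvature eigenvalues $4$ and $1$ together with the relation $|\mathfrak f|=\tfrac12 d^2_{CC}$, while the terms $(v_0\mathfrak f)^2$ and $\tfrac14(v_0\mathfrak f)^2$ are the purely sub-Riemannian contributions already present in the flat (Heisenberg) computation, cf. Proposition \ref{maincor2}. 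Multiplying the block contributions gives that the Jacobian of $\varphi_t$ at $x$ equals $(1-t)\,\mathcal M_1(\mathbf{k_1}(x),t)\,\mathcal M_2^{2n-2}(\mathbf{k_2}(x),t)\big/\bigl(\mathcal M_1(\mathbf{k_1}(x),0)\,\mathcal M_2^{2n-2}(\mathbf{k_2}(x),0)\bigr)$, a quantity positive on $(0,1)$ since a minimizing geodesic has no conjugate point before its endpoint. The change-of-variables formula $\mu(\varphi_t(U))=\int_U|\mathrm{Jac}\,\varphi_t(x)|\,d\mu(x)$, valid on the full-measure set, then gives the claimed equality.

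It remains to deduce the two ``in particular'' assertions. For $\hMCP{4}{1}{2n}{2n+1}$ one uses $8|\mathfrak f(x)|=4\,d^2_{CC}(x,x_0)$, $2|\mathfrak f(x)|=d^2_{CC}(x,x_0)$ and $(v_0\mathfrak f(x))^2\geq 0$, so that the exact expression just obtained is bounded below, factor by factor, by the right-hand side of Definition \ref{gMC} with $k_1=4$, $k_2=1$, $N=2n+1$; the comparison between $\mathcal M_i(\cdot,t)/\mathcal M_i(\cdot,0)$ at the enlarged argument $\mathbf{k_i}(x)$ and at $k_i\,d^2_{CC}(x,x_0)$, which supplies the missing powers of $(1-t)$, is exactly the monotonicity-type inequality for $\mathcal M_1$ and $\mathcal M_2$ established in the course of proving Theorem \ref{main}. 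Finally, since $4\geq 0$ and $1\geq 0$, the condition $\hMCP{4}{1}{2n}{2n+1}$ implies $\MCP{0}{2n+3}$ by the remark recorded right after Definition \ref{gMC}.

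I expect the main obstacle to be this explicit closed-form integration of the two scalar Riccati ODEs, together with the bookkeeping that matches their solutions to $\mathcal M_1$ and $\mathcal M_2$ with precisely the arguments $\mathbf{k_1}$ and $\mathbf{k_2}$ claimed --- in particular, tracking the numerical constants that turn the curvature eigenvalues $4$ and $1$ into $8|\mathfrak f|$ and $2|\mathfrak f|$ and the normalization of the Reeb momentum that produces $(v_0\mathfrak f)^2$ and $\tfrac14(v_0\mathfrak f)^2$. A secondary point requiring care is checking that the relevant range of the geodesics of the Hopf fibration stays strictly below the first conjugate time, so that the moving-frame computation is legitimate and $\mathrm{Jac}\,\varphi_t(x)>0$ for $t\in(0,1)$; this is handled using that the Hopf fibration is the sub-Riemannian metric quotient of the round sphere by the Hopf circle action, together with the explicit description of its geodesics.
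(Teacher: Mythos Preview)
Your approach matches the paper's: the propositions are recorded there as consequences of the proof of Theorem~\ref{main} together with the equality case of Lemma~\ref{Riccati1}, which is precisely your observation that when the curvature blocks are scalar the Riccati comparison becomes an identity. Two small corrections for when you write out the details: the single factor $(1-t)$ does not come from the Reeb line---the direction $E_1$ lies in $\ker C_2$---but from the geodesic direction $E_{2n+1}(0)=\p^h+\p(v_0)v_0$, so your block splitting should include that one-dimensional summand; and for the ``in particular'' clause no separate monotonicity of the $\mathcal M_i$ is proved in the paper or needed, since the Hopf fibration satisfies the hypotheses of Theorem~\ref{main} with $k_1=4$ and $k_2=1$, whence $\hMCP{4}{1}{2n}{2n+1}$ follows directly and then $\MCP{0}{2n+3}$ by the remark after Definition~\ref{gMC}.
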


Finally, in the case of negative curvature, we have the anti de-Sitter space. In this case, the manifold $M$ is $
\mathbb H^{2n+1}=\{z\in\Complex^{n+1}||z|_H=1\}$, where $|z|_H$ is defined by
\[
|z|_H=|z_{n+1}|^2-|z_1|^2-...-|z_n|^2.
\]
The vector field $v_0$ is given by $v_0=i\sum_{j=1}^{n+1}(z_j\partial_{z_j}-\bar z_j\partial_{\bar z_j})$. The quotient of $\mathbb H^{2n+1}$ by the flow of $v_0$ is the complex hyperbolic space $\mathbb C\mathbb H^n$.

The Riemannian metric coincides with the Euclidean one on $\mathcal D$ and $J$ coincides with standard complex structure on $\mathcal D$. The Tanaka-Webster curvature $\Rm^*$, in this case, satisfies
\[
\left<\Rm^*(JX,X)X,JX\right>=-4|X|^4,\quad \left<\Rm^*(Y,X)X,Y\right>=-|X|^2|Y|^2
\]
for all $X$ in $\mathcal D$ and $Y$ in $\{X,JX\}^\perp\cap\mathcal D$ (see \cite{Ep} or \cite{Wa}). Let $\mu$ be the Riemannian volume.

\begin{prop}\label{maincor4}
The anti de-Sitter space $\mathbb H^{2n+1}$ satisfies
\[
\mu(\varphi_t(U))= \int_{U}\frac{(1-t)\mathcal M_1(\mathbf{k_1}(x),t)\mathcal M_2^{2n-2}(\mathbf{k_2}(x),t)}{\mathcal M_1(\mathbf{k_1}(x),0)\mathcal M_2^{2n-2}(\mathbf{k_2}(x),0)}d\mu(x)
\]
for any Borel set $U$, where
\[
\begin{split}
&\mathbf{k_1}(x)=-8|\mathfrak f(x)|+(v_0\mathfrak f(x))^2, \quad\mathbf{k_2}(x)=-2|\mathfrak f(x)|+\frac{(v_0 \mathfrak f(x))^2}{4},\\
&\mathfrak f(\x)=-\frac{1}{2}d^2_{CC}(\x,\x_0).
\end{split}
\]
In particular, the metric measure space $(M, d_{CC}, \mu)$ satisfies the condition $\hMCP{-4}{-1}{2n}{2n+1}$.
\end{prop}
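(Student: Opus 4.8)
The plan is to follow the proof of Theorem~\ref{main} for the anti de-Sitter space $\mathbb H^{2n+1}$ and to observe that, because this space has constant Tanaka--Webster holomorphic sectional curvature, every inequality used in that proof collapses to an equality. Fix $x_0$ and a point $x$ outside the $\mu$-null cut locus of $x_0$, let $\gamma$ be the unique minimizer from $x$ to $x_0$, and attach to $\gamma$ its Jacobi curve, the canonical moving frame, and the associated matrix Riccati equation (Lemma~\ref{keylem} and Section~\ref{Ricsec}). As in the proof of Theorem~\ref{main}, $\det D\varphi_t(x)$ is read off from a fundamental solution of this Riccati equation, and, following the decoupling of Lemma~\ref{Riccati2}, its computation splits into a factor $(1-t)$ from the direction of $\dot\gamma$, one scalar equation for the block spanned by the complex direction $Jv$ and the Reeb direction $v_0$, and $2n-2$ scalar equations, one for each of the transversal directions $w_i$. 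The key point is that for $\mathbb H^{2n+1}$ the curvature term in the matrix Riccati equation is already diagonal in the canonical frame --- the sub-Riemannian analogue of the Jacobi equation on a space of constant holomorphic sectional curvature being explicitly integrable --- so the passage from the matrix equation to the scalar ones by taking traces (as in Lemma~\ref{Riccati2}) loses no information, and the bounds \eqref{holomsect}, \eqref{SasRicci} hold here with equality, with $k_1=-4$ and $k_2=-1$.

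Next I would pin down the (constant) coefficients of these scalar equations. Writing $\mathfrak f=-\tfrac12 d_{CC}^2(\cdot,x_0)$, the geodesic $\gamma$ starts at $x$ with covector $d\mathfrak f(x)$; its horizontal energy is $|\mathfrak f(x)|=\tfrac12 d_{CC}^2(x,x_0)$ and its Reeb momentum is $v_0\mathfrak f(x)$. Substituting the anti de-Sitter curvature $\left<\Rm^*(JX,X)X,JX\right>=-4|X|^4$ and $\left<\Rm^*(Y,X)X,Y\right>=-|X|^2|Y|^2$, together with the Reeb-momentum term present in the curvature operator of the Riccati equation, the $\{Jv,v_0\}$-block is driven by the constant $\mathbf{k_1}(x)=-4\,d_{CC}^2(x,x_0)+(v_0\mathfrak f(x))^2=-8|\mathfrak f(x)|+(v_0\mathfrak f(x))^2$, and each transversal block by $\mathbf{k_2}(x)=-d_{CC}^2(x,x_0)+\tfrac14(v_0\mathfrak f(x))^2=-2|\mathfrak f(x)|+\tfrac14(v_0\mathfrak f(x))^2$. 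Solving these constant-coefficient equations on $[t,1]$ and normalizing at $t=0$, the $\{Jv,v_0\}$-block contributes $\mathcal M_1(\mathbf{k_1}(x),t)/\mathcal M_1(\mathbf{k_1}(x),0)$ and each transversal block $\mathcal M_2(\mathbf{k_2}(x),t)/\mathcal M_2(\mathbf{k_2}(x),0)$, with $\mathfrak D(\mathbf{k_i}(x),t)=\sqrt{|\mathbf{k_i}(x)|}(1-t)$ --- which is precisely how $\mathcal M_1$ and $\mathcal M_2$ were defined. Therefore
\[
\det D\varphi_t(x)=(1-t)\,\frac{\mathcal M_1(\mathbf{k_1}(x),t)\,\mathcal M_2^{\,2n-2}(\mathbf{k_2}(x),t)}{\mathcal M_1(\mathbf{k_1}(x),0)\,\mathcal M_2^{\,2n-2}(\mathbf{k_2}(x),0)},
\]
which equals $1$ at $t=0$, and the change-of-variables formula $\mu(\varphi_t(U))=\int_U\det D\varphi_t(x)\,d\mu(x)$ gives the asserted identity.

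To obtain $\hMCP{-4}{-1}{2n}{2n+1}$ from this identity, observe that $(v_0\mathfrak f(x))^2\ge 0$, so $\mathbf{k_1}(x)\ge -4\,d_{CC}^2(x,x_0)$ and $\mathbf{k_2}(x)\ge -d_{CC}^2(x,x_0)$; in particular $|\mathbf{k_i}(x)|$ never exceeds the corresponding quantity with the momentum term removed. Since one checks directly from the formulas that $k\mapsto\mathcal M_i(k,t)/\mathcal M_i(k,0)$ is monotone in the relevant range --- decreasing $|k|$ only increases the ratio --- and since $(1-t)\ge(1-t)^{2n+3}$ for $t\in[0,1]$, the integrand of the exact formula dominates pointwise the integrand of Definition~\ref{gMC} with $N=2n+1$, $k_1=-4$, $k_2=-1$; integrating over $U$ gives the measure contraction property.

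The step requiring the most care --- and the common obstacle in Propositions~\ref{maincor2} and~\ref{maincor3} as well --- is the first one: verifying that for $\mathbb H^{2n+1}$ the matrix Riccati equation genuinely decouples into the three pieces above with constant coefficients, equivalently that in this highly symmetric model all symplectic invariants of the Jacobi curve other than those feeding $\mathcal M_1$ and $\mathcal M_2$ vanish, or are constant along $\gamma$. This amounts to combining the explicit anti de-Sitter curvature with the formulas for the symplectic invariants of the Jacobi curve established below (following the computations of \cite{LeLi}). Once this is done, what remains is the constant-coefficient ODE bookkeeping and the change of variables, the latter being legitimate because, off a $\mu$-null set, $\varphi_t$ is injective and $\gamma$ carries no conjugate point before its endpoint, by the general facts recalled for the proof of Theorem~\ref{main}.
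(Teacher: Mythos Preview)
Your derivation of the exact formula is essentially the paper's argument: the paper states that Propositions~\ref{maincor2}--\ref{maincor4} follow from the proof of Theorem~\ref{main} together with the \emph{equality case of Lemma~\ref{Riccati1}}. For $\mathbb H^{2n+1}$ the Tanaka--Webster curvature has constant holomorphic sectional curvature, so $\Rm^*(J\p^h,\p^h)\p^h$ is proportional to $J\p^h$ and $\Rm^*(w,\p^h)\p^h$ is proportional to $w$ for $w\perp\p^h,J\p^h$; by Theorem~\ref{compute} the matrix $R(t)$ is therefore exactly the constant block-diagonal matrix in Lemma~\ref{Riccati1}, and the equality clause there gives the formula. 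Your route through the trace-decoupling of Lemma~\ref{Riccati2}, observing that the off-diagonal blocks $S_2,S_3,S_5$ vanish when $R$ is block-diagonal and constant, is an equivalent way of saying the same thing.

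For the $\hMCP{-4}{-1}{2n}{2n+1}$ conclusion, however, you take an unnecessary detour and make a slip. The paper simply notes that anti de-Sitter satisfies the hypotheses of Theorem~\ref{main} with $k_1=-4$, $k_2=-1$, so the measure contraction property is immediate. Your alternative, deducing it from the exact formula via monotonicity of $k\mapsto \mathcal M_i(k,t)/\mathcal M_i(k,0)$, contains an incorrect intermediate claim: it is \emph{not} true that ``$|\mathbf k_i(x)|$ never exceeds the corresponding quantity with the momentum term removed'', since $(v_0\mathfrak f(x))^2$ can be large enough to make $\mathbf k_i(x)$ positive and larger in absolute value than $|k_i|d^2$. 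What \emph{is} true is the one-sided bound $\mathbf k_i(x)\ge k_i\,d^2(x,x_0)$; the relevant monotonicity (a Sturm-type statement) is that the ratio is nondecreasing in $k$, not in $|k|$. But there is no need to invoke this at all: since $R_{bb}(t)\ge -4|\nabla_\hor\mathfrak f|^2$ and $\tr R_{cc}(t)\ge -(2n-2)|\nabla_\hor\mathfrak f|^2$, Lemma~\ref{Riccati2} (equivalently Theorem~\ref{main}) applies directly with $\mathfrak k_1=-4d^2$, $\mathfrak k_2=-d^2$.
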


Our approach gives very sharp result in the sense that Propositions \ref{maincor2}, \ref{maincor3}, and \ref{maincor4} follow from the proof of Theorem \ref{main}.

\smallskip

\section{Sub-Riemannian Geodesic Flows and Curvature on Sasakian manifolds}

In this section, we recall the definition of the sub-Riemannian geodesic flow and its connections with the contraction of measures appeared in \cite{AgLe1,AgLe2}.

As in the Riemannian case, the (constant speed) minimizers of (\ref{SRd}) can be found by minimizing the following kinetic energy functional
\begin{equation}\label{energy}
\inf_{\gamma\in\Gamma}\int_0^1\frac{1}{2}|\dot\gamma(t)|^2dt.
\end{equation}
In the Riemannian case, the minimizers of (\ref{energy}) are given by the geodesic equation. In the sub-Riemannian case, the minimization problem in (\ref{energy}) becomes a constrained minimization problem and it is more convenient to look at the geodesic flow from the Hamiltonian point of view in this case. For this, let $\ham:T^*M\to\Real$ be the Hamiltonian defined by the Legendre transform:
\[
\ham(\x, \p)=\sup_{\tv\in \Di}\left(\p(\tv)-\frac{1}{2}|\tv|^2\right).
\]
This Hamiltonian, in turn, defines a Hamiltonian vector field $\vec\ham$ on the cotangent bundle $T^*M$ which is a sub-Riemannian analogue of the geodesic equation. It is given, in the local coordinates $(x_1,...,x_N,p_1,...,p_N)$, by
\[
\vec\ham=\sum_{i=1}^N\left(\ham_{p_i}\partial_{x_i}-\ham_{x_i}\partial_{p_i}\right).
\]
We assume, through out this paper, that the vector field $\vec\ham$ defines a complete flow which is denoted by $e^{t\vec\ham}$. In the Riemannian case, the minimizers of (\ref{energy}) are given by the projection of the trajectories of $e^{t\vec\ham}$ to the manifold $M$. In the sub-Riemannian case, minimizers obtained this way are called normal geodesics and they do not give all the minimizers of (\ref{energy}) in general (see \cite{Mo} for more detailed discussions on this). On the other hand, all minimizers of (\ref{energy}) are normal if the distribution $\Di$ is contact (see \cite{Mo}).

Next, we discuss an analogue of the Jacobi equation in the above Hamiltonian setting. For this, let $\omega$ be the canonical symplectic form of the cotangent bundle $T^*M$. In local coordinates $(x_1,...,x_N,p_1,...,p_N)$, $\omega$ is given by
\[
\omega=\sum_{i=1}^N{dp_i\wedge dx_i}.
\]

Let $\pi : T^*M \rightarrow M$ be the canonical projection and let $\ver$ the vertical sub-bundle of the cotangent bundle $T^*M$ defined by
\[
\ver_{(\x,\p)}=\{v\in T_{(\x,\p)}T^*M| \pi_*(v)=0\}.
\]
Recall that a $n$-dimensional subspace of a symplectic vector space is Lagrangian if the symplectic form vanishes when restricted to the subspace. Each vertical space $\ver_{(\x,\p)}$ is a Lagrangian subspace of the symplectic vector space $T_{(\x,\p)}T^*M$. Since the flow $e^{t\vec\ham}$ preserves the symplectic form $\omega$, it also sends a Lagrangian subspace to another Lagrangian one. Therefore, the following also forms a one-parameter family of Lagrangian subspaces contained in $T_{(\x,\p)}T^*M$
\begin{equation}\label{Jacobi}
\mathfrak J_{(\x,\p)}(t)=e^{-t\vec\ham}_*(\ver_{e^{t\vec\ham}(\x,\p)}).
\end{equation}
This family defines a curve in the Lagrangian Grassmannian (the space of Lagrangian subspaces) of $T_{(\x,\p)}T^*M$ and it is called the Jacobi curve at $(\x,\p)$ of the flow $e^{t\vec\ham}$.

Assume that the distribution is contact. Then we have the following particular case of the results in \cite{LiZe2,LiZe3} on the canonical bundle of moving frames for curves in Lagrangian Grassmannians:

\begin{theor}\label{frame}
Assume that the distribution $\Di$ is contact. Then there exists a one-parameter family of bases $\bigl(E(t), F(t)\bigr)$, where
\[
\begin{split}
&E(t)=(E_1(t),...,E_{2n+1}(t))^T,\\
&F(t)=(F_1(t),...,F_{2n+1}(t))^T
\end{split}
\]
of the symplectic vector space $T_{(\x,\p)}T^*M$ such that the followings hold for any $t$:
\begin{enumerate}
\item $\mathfrak J_{(\x,\p)}(t)={\rm span}\{E_1(t),...,E_{2n+1}(t)\}$,
\item ${\rm span}\{F_1(t),...,F_{2n+1}(t)\}$ is a family of Lagrangian subspaces,
\item $\omega(F_i(t),E_j(t))=\delta_{ij}$,
\item $\dot E(t)=C_1E(t)+C_2F(t)$,
\item $\dot F(t)=-R(t)E(t)-C_1^TF(t)$,
\end{enumerate}
where $R(t)$ is a symmetric matrix, $C_1$ and $C_2$ are $(2n+1)\times(2n+1)$ matrices defined by
\begin{enumerate}
\item [a)] $\tilde C_1=\left(\begin{array}{cc}
0 & 1\\
0 & 0
\end{array}\right)$ is a $2\times 2$ matrix,
\item [b)] $\tilde C_2=\left(\begin{array}{cc}
0 & 0\\
0 & 1
\end{array}\right)$ is a $2\times 2$ matrix,
\item [c)] $C_1=\left(\begin{array}{cc}
\tilde C_1 & O\\
O & O
\end{array}\right)$
\item [d)] $C_2=\left(\begin{array}{cc}
\tilde C_2 & O\\
O & I
\end{array}\right)$.
\end{enumerate}
(note that in the right hand sides of equations in items (4) and (5) these matrices (with scalar entries) are multiplied on $E$ and $F$, which are columns with vector entries resulting again columns with vector entries).

Moreover, a moving frame $\tilde E_1(t),...,\tilde E_{2n+1}(t),\tilde F_1(t),...,\tilde F_{2n+1}(t)$ satisfies conditions (1)-(6) above
if and only if
\begin{equation}\label{u}
 (\widetilde E_1(t),\widetilde E_2(t),\widetilde
F_1(t),\widetilde F_2(t))=\pm(E_1(t),E_2(t),F_1(t),F_2(t))
\end{equation}
and there exists a constant orthogonal matrix $U$ of size $(2n-1)\times (2n-1)$ (independent of time $t$) such that
\begin{equation}
\widetilde E_{i}(t)=\sum_{j=3}^{2n+1}U_{i-2, j-2}E_{j}(t)\text{ and }\widetilde F_{i}(t)=\sum_{j=3}^{2n+1}U_{i-2,j-2}F_{j}(t)
\end{equation}
for all $i=3,\ldots, 2n+1$  and any time $t\geq 0$.
\end{theor}

We call any frame $(E(t), F(t))$ in Theorem \ref{frame} a canonical frame at the point $(\x,\p)$ and call the equations in (5) and (6) of Theorem \ref{frame} the structural equation of the Jacobi curve (\ref{Jacobi}). Note that the conditions (1) - (3) means that the canonical frame is a family of symplectic bases.

Let $\mathcal R:\ver\to\ver$ be the operator defined by
\[
\mathcal R(E_i(0))=\sum_{j=1}^{2n+1}R_{ij}(0)E_j(0).
\]
Under the following identification, we can also consider $\mathcal R$ as an operator on $TM$
\[
v\in T_{\x}M\mapsto \alpha(\cdot)=\left<v,\cdot\right>\in T^*_{\x}M\mapsto \frac{d}{dt}(\p+t\alpha)\Big|_{t=0}\in\ver_{(\x,\p)}.
\]
Let $\ver_1={\rm span}\{E_1(0)\}$, $\ver_2={\rm span}\{E_2(0)\}$, and
\[
\ver_3={\rm span}\{E_3(0),...,E_{2n+1}(0)\}.
\]
The proofs of following results can be found in \cite{LeLi}.

\begin{theor}
Under the above identification of $\ver_{(\x,\p)}$ and $T_{\x}M$, we have the followings
\begin{enumerate}
\item $\ver_1=\Real v_0$,
\item $\ver_2=\Real J \p^h$,
\item $\ver_3=\Real (\p^h+\p(v_0)v_0)\oplus\{v\in T_\x M|\left<v,\p^h\right>=\left<v,J \p^h\right>=\left<v,v_0\right>=0\}$,
\end{enumerate}
where $\p^h$ is the tangent vector in the distribution $\mathcal D$ defined by $\p(\cdot)=\left<\p^h,\cdot\right>$ on $\Di$.
\end{theor}

\begin{theor}\label{compute}
Assume that the manifold is Sasakian. Then, under the above identifications, $\mathcal R$ is given by
\begin{enumerate}
\item $\mathcal R(v)=0$ for all $v$ in $\ver_1$,
\item $\mathcal R(v)=\Rm^*(v,\p^h)\p^h +\p(v_0)^2v$ for all $v$ in $\ver_2$,
\item $\mathcal R(\p^h+\p(v_0)v_0)=0$,
\item $\mathcal R(v)_{\ver_3}=\Rm^*(v^h,\p^h)\p^h+\frac{1}{4}p(v_0)^2v^h$ for all $v$ in $\ver_3$ satisfying $\left<v^h,\p^h\right>=0$,
\end{enumerate}
where $\p^h$ and $v^h$ are the tangent vectors in the distribution $\mathcal D$ defined by $\p(\cdot)=\left<\p^h,\cdot\right>$ and $\left<v,\cdot\right>=
\left<v^h,\cdot\right>$ on $\Di$, respectively,  and $\mathcal R(v)_{\ver_i}$ denotes the projection of $\mathcal R(v)$ to $\ver_i$ with respect to the splitting $$TM=\ver_1\oplus \ver_2\oplus\ver_3.$$
\end{theor}

\smallskip

\section{Measure Contraction and Matrix Riccati Equation}
\label{Ricattisec}

In this section, we discuss the connections between measure contraction properties and the matrix Riccati equations. For this, first let us summarize the regularity properties of the distance function $\x\mapsto d_{CC}(\x,\x_0)$ and how they affect the properties of  length-minimizing paths in the following:

\begin{lemma}
\label{clar1.3}
For a contact sub-Riemannian metric the following statements hold:
\begin{enumerate}
\item
For any point $\x_0$ in the manifold $M$, the distance function $\x\mapsto d_{CC}(\x,\x_0)$ is locally semi-concave in $M-\{\x_0\}$. In particular, it is twice differentiable Lebesgue almost everywhere in $M$.

\item
For any point $\x$ for which  the function $\x\mapsto d_{CC}(\x,\x_0)$ is differentiable, 
there exists a unique length minimizing sub-Riemannian geodesic and this geodesic is normal.
\end{enumerate}
\end{lemma}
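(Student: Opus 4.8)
The statement is the specialization to the contact case of the Cannarsa--Rifford regularity theory for sub-Riemannian distance functions, and the plan is to isolate the one structural input that makes that theory apply --- the absence of non-trivial abnormal extremals --- and then quote the general results. So the first step is to show that a contact distribution admits no non-constant abnormal (singular) extremal, which will simultaneously yield the normality assertion of part (2) and the compactness and support-function inputs needed for part (1).

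Let $\gamma$ be an abnormal extremal and let $\lambda(\cdot)$ be a non-zero covector curve along it with $\lambda(t)$ vanishing on $\Di_{\gamma(t)}$; since $\Di^\perp$ is spanned by $\alpha_0$ we have $\lambda(t)=s(t)\,\alpha_0|_{\gamma(t)}$ with $s(t)\neq 0$. Writing $\dot\gamma(t)=\sum_i u_i(t)X_i(\gamma(t))$ in a horizontal frame $\{X_i\}$ and using the Pontryagin characterization of abnormal extremals, differentiation of the identities $\langle\lambda(t),X_i(\gamma(t))\rangle\equiv 0$ gives $\sum_j u_j(t)\langle\lambda(t),[X_j,X_i](\gamma(t))\rangle=0$ for each $i$. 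Since $\langle\alpha_0,[X_j,X_i]\rangle=-d\alpha_0(X_j,X_i)$ on $\Di$, this says precisely that $d\alpha_0(\dot\gamma(t),\cdot)$ vanishes on $\Di_{\gamma(t)}$; as $d\alpha_0|_{\Di}$ is non-degenerate, $\dot\gamma\equiv 0$ and $\gamma$ is constant. Consequently every length-minimizing geodesic is (strictly) normal, i.e. the projection of a trajectory of $e^{t\vec\ham}$, which is the normality clause of part (2).

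For part (1), I would invoke the result of \cite{CaRi} that, in the absence of abnormal minimizers, $\x\mapsto d_{CC}^2(\x,\x_0)$ is locally semiconcave on $M\setminus\{\x_0\}$; recall the mechanism. Near a point $\bar\x\neq\x_0$, each minimizing geodesic from $\x_0$ to $\bar\x$ is strictly normal, hence a regular point of the endpoint map, so to its initial covector $\p_0$ one can attach a $C^\infty$ function $F_{\p_0}$ on a fixed neighbourhood of $\bar\x$ with $F_{\p_0}\ge d_{CC}^2(\cdot,\x_0)$, equality at $\bar\x$, and $C^2$-norm bounded uniformly as $\p_0$ ranges over the (compact, again by the absence of abnormals) set of initial covectors of minimizers from $\x_0$ to $\bar\x$; then $d_{CC}^2(\cdot,\x_0)$ is, near $\bar\x$, the infimum of this uniformly semiconcave family, hence semiconcave. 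Away from $\x_0$ the distance $d_{CC}(\cdot,\x_0)=\sqrt{d_{CC}^2(\cdot,\x_0)}$ is the composition of the smooth increasing concave function $\sqrt{\,\cdot\,}$ (near a positive value) with a semiconcave function, hence locally semiconcave, and Alexandrov's theorem gives twice differentiability Lebesgue-almost everywhere, completing part (1).

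Finally, for uniqueness of the minimizer at a differentiability point $\x$ of $d_{CC}(\cdot,\x_0)$: for any minimizer $\gamma$ from $\x_0$ to $\x$, normality provides a covector lift $p(\cdot)$ solving Hamilton's equations, and the first variation formula puts a covector built from the terminal value of $p$ into the superdifferential $D^{+}d_{CC}(\cdot,\x_0)(\x)$; differentiability forces this superdifferential to be a singleton, so all minimizers share the same terminal covector, and running $e^{-t\vec\ham}$ backwards from it recovers a single geodesic. The main obstacle is the semiconcavity step of part (1): the construction of the smooth upper supports $F_{\p_0}$ with uniformly bounded Hessians over the compact family of minimizers, which is the technical heart of \cite{CaRi} and rests on the non-degeneracy of the second variation along strictly normal extremals; once that input is granted, the reduction to the contact case and the superdifferential argument are routine.
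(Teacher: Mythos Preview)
Your proposal is correct and follows essentially the same route as the paper: both reduce the lemma to the Cannarsa--Rifford theory by invoking the absence of non-trivial abnormal extremals in the contact case, then appeal to Alexandrov's theorem for the twice-differentiability and to the superdifferential/normality argument for uniqueness. The only difference is granularity: where the paper simply cites \cite{CaRi} (Theorems 1, 4, 5) and \cite{CaSi, Mo} for the inputs, you spell out the standard proof that contact distributions carry no singular curves and sketch the support-function mechanism behind semiconcavity.
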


Both items of Lemma \ref{clar1.3} are direct consequences of the results of \cite{CaRi}: the first statement of item (1) regarding semi-concavity follows from Theorem 5 or more general Theorem 1 there and the fact that there is no singular curve in the contact case (see \cite{CaSi, Mo} for more detail). The second statement of item (1) uses Alexandrov's theorem \cite[Theorem 2.3.1 (i)]{CaSi} that a locally semi-concave function is twice differentiable almost everywhere. Item (2) of Lemma \ref{clar1.3} follows  from Theorem 4 of \cite{CaRi}.

Let $\mathfrak f(\x)=-\frac{1}{2}d^2_{CC}(\x,\x_0)$. By item (2) of Lemma \ref{clar1.3},  we can define the family of Borel maps $\varphi_t:M\to M$ so that $t\mapsto\varphi_t(\x)$ is a minimizing geodesic between the points $\x$ and $\x_0$ for almost every $\x$.
\begin{lemma}\label{clar1.32}
For each $t$ in $[0,1)$, there is a set $U$ of full measure such that the map $\varphi_t$ is differentiable and injective on $U$.
\end{lemma}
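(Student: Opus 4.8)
The plan is to deduce differentiability and injectivity of $\varphi_t$ on a full-measure set from the regularity of the squared distance function and the structure of the sub-Riemannian geodesic flow. First I would fix $t\in[0,1)$ and recall from item (1) of Lemma \ref{clar1.3} (via Alexandrov's theorem) that $\mathfrak f(\x)=-\tfrac12 d^2_{CC}(\x,\x_0)$ is twice differentiable on a set $A$ of full measure in $M$. At a point $\x\in A$ of twice-differentiability, the geodesic from $\x$ to $\x_0$ is the normal geodesic with initial covector $d\mathfrak f(\x)\in T^*_\x M$ (this is the content of item (2) of Lemma \ref{clar1.3} together with the standard fact that the initial covector of the unique minimizer is the differential of the value function). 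Consequently on $A$ one has the explicit formula $\varphi_t(\x)=\pi\bigl(e^{(1-t)\vec\ham}(\x, d\mathfrak f(\x))\bigr)$, where $\pi:T^*M\to M$ is the canonical projection. Since $e^{s\vec\ham}$ is a smooth (indeed, complete) flow and $\pi$ is smooth, $\varphi_t$ is differentiable at every point of $A$ — this gives differentiability on a full-measure set.

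For injectivity, the key point is that two distinct points $\x,\x'\in A$ with $\varphi_t(\x)=\varphi_t(\x')=:y$ would force the two minimizing geodesics $s\mapsto\varphi_s(\x)$ and $s\mapsto\varphi_s(\x')$ to pass through the common point $y$ at time $t<1$ and then both terminate at $\x_0$ at time $1$; running the Hamiltonian flow forward from time $t$, the portion of each geodesic on $[t,1]$ is determined by its covector at $y$, and the portion on $[0,t]$ by the same covector run backward. If the covectors at $y$ agreed, the two full geodesics would coincide and hence $\x=\x'$; so the covectors at $y$ must differ, which means $y$ is joined to $\x_0$ by (at least) two distinct normal extremal trajectories of equal length $(1-t)\,d_{CC}(\x,\x_0)$ — in other words $y$ lies in the cut locus of $\x_0$, or more precisely in the set of points reached by non-unique minimizers. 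By item (2) of Lemma \ref{clar1.3} this "bad" set $\mathcal B$ has measure zero. To conclude I would restrict attention to the full-measure set $A'=A\setminus\varphi_t^{-1}(\mathcal B)$; one must check $\varphi_t^{-1}(\mathcal B)$ is null, which follows because $\varphi_t$ restricted to $A$ is locally Lipschitz (being a composition of the smooth map $\pi\circ e^{(1-t)\vec\ham}$ with the map $\x\mapsto(\x,d\mathfrak f(\x))$, and $d\mathfrak f$ is itself the a.e.\ derivative of a semiconcave function, hence of bounded variation and locally bounded, so on the Alexandrov points $\varphi_t$ is differentiable with locally bounded differential), and Lipschitz maps send null sets to null sets — here one uses that $\mathcal B$ null combined with $\varphi_t|_A$ having an a.e.\ defined, locally bounded Jacobian forces $\varphi_t^{-1}(\mathcal B)\cap A$ to be null. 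On $A'$ the map $\varphi_t$ is then differentiable and injective, and $U:=A'$ has full measure.

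The main obstacle, and the step deserving the most care, is the measure-theoretic argument that $\varphi_t^{-1}(\mathcal B)$ is null: $\varphi_t$ is a priori only defined a.e.\ and is not globally Lipschitz, so one cannot naively invoke "Lipschitz images of null sets." The clean way around this is to work with the change-of-variables structure already needed elsewhere in the paper — namely that along each minimizing geodesic the pushforward of the volume under $\varphi_t$ has a density (the Jacobian) which is finite for $s<1$ (this is precisely what the Riccati-equation analysis in the following sections controls), so that $\varphi_t$ is non-degenerate a.e.\ and the preimage of a null set is null. I would phrase the proof so that this consequence of the forthcoming Riccati estimates is invoked cleanly, or alternatively reduce to the local semiconcavity estimates of \cite{CaRi} directly, whichever keeps the exposition shortest; either way the geometric heart — distinct minimizers meeting at an interior time land in the cut locus — is straightforward.
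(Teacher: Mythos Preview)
Your differentiability argument is essentially the paper's: take $U$ to be the set where $\mathfrak f$ is twice differentiable and write $\varphi_t$ as the smooth map $\pi\circ e^{t\vec\ham}$ composed with $\x\mapsto(\x,d\mathfrak f_\x)$. One small slip: the correct formula is $\varphi_t(\x)=\pi\bigl(e^{t\vec\ham}(\x,d\mathfrak f_\x)\bigr)$, not $e^{(1-t)\vec\ham}$; with your exponent you would get $\varphi_0(\x)=\x_0$ rather than $\x$.

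For injectivity, however, you have taken a detour that creates the very obstacle you then struggle with. The paper's argument is much shorter: on the set $U$ of twice-differentiability points, $\varphi_t$ is already injective, with no further excision needed. The reason is that two minimizing normal geodesics ending at the same point $\x_0$ cannot meet at an interior time. Concretely, if $\varphi_t(\x)=\varphi_t(\x')=y$ with $t\in(0,1)$, then the two tail segments from $y$ to $\x_0$ are both minimizing, hence have the same length $d_{CC}(y,\x_0)$, forcing $d_{CC}(\x,\x_0)=d_{CC}(\x',\x_0)$. If these tails were distinct as curves, concatenating the initial segment $\varphi_{[0,t]}(\x)$ with the tail of the other geodesic would produce a minimizing horizontal path from $\x$ to $\x_0$ with a corner at $y$; in the contact case every minimizer is a normal extremal, hence smooth, which is a contradiction. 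So the tails coincide, and running the Hamiltonian flow backward from $y$ gives $\x=\x'$.

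This broken-geodesic (``corner'') argument is exactly the content of the paper's one-line claim that distinct minimizing geodesics ending at $\x_0$ do not intersect except at the endpoints. It eliminates your ``covectors differ'' case outright, so there is no need to introduce the cut locus $\mathcal B$, no need to show $\varphi_t^{-1}(\mathcal B)$ is null, and no need to appeal forward to the Riccati estimates. Your self-identified weak point is thus avoidable rather than merely fixable.
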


\begin{proof}
Let $U$ be the set where $\mathfrak f$ is twice differentiable. It follows as in \cite{AgLe1} that the map $\varphi_t$ is given by $\varphi_t(x)=\pi(e^{t\vec H}(d\mathfrak f_x))$ on $U$. Indeed, for each horizontal curve $\gamma:[0,1]\to M$ satisfying $\gamma(1)=\x_0$, we have
\begin{equation}\label{bolza}
\frac{1}{2}\int_0^1|\dot\gamma(t)|^2dt+\mathfrak f(\x)\geq 0.
\end{equation}
Equality holds if and only if $\gamma$ is a length minimizing geodesic connecting $\x$ and $\x_0$. Therefore, $\gamma(t)=\varphi_t(\x)$ if $\gamma$ is a minimizer of the functional (\ref{bolza}). The rest follows from Pontryagin maximum principle for Bolza problem (see \cite{CaSi}). This also gives a proof of item (2) in Lemma \ref{clar1.3}.

Since $t\mapsto\varphi_t(\x)$ is a minimizing geodesic which starts from $\x$ and ends at $\x_0$ and different normal minimizing geodesics which end at the same point $x_0$ don't intersect except possibly at the end points, it follows that $\varphi_t$ is injective on $U$ for each $t$ in $[0,1)$.
\end{proof}
Further, let $\mu$ be the Riemannian volume form.  As follows from \cite{FiRi}, the measures $(\varphi_t)_*\mu$ are absolutely continuous with respect to $\mu$ for all time $t$ in the interval $[0,1)$. If $(\varphi_t)_*\mu=\rho_t\mu$, then the following (Jacobian) equation holds on a set of full measure where $\mathfrak f$ is twice differentiable:
\begin{equation}
\label{Jacobianeq}
\rho_t(\varphi_t(\x))\det((d\varphi_t)_\x)=1
\end{equation}
and the determinant is computed with respect to frames of the above mentioned Riemannian structure. Here we use the fact that in our situation the classical formula for the change of variables in integrals is still valid although the maps $\varphi_t$ representing the changes of variables are just differentiable almost everywhere (see, for example, \cite[Theorem 11.1 (ii)]{Villani}).

Besides, since, by Lemma \ref{clar1.32}, the maps $\varphi_t$ are invertible for all $t$ in $[0,1)$, we have
\begin{equation}\label{vol}
\mu(\varphi_t(U))=\int_U\frac{1}{\rho_t(\varphi_t(\x))}d\mu(\x)=\int_U\det((d\varphi_t)_\x)d\vol(\x).
\end{equation}
Therefore, in order to prove the main results and the measure contraction properties, it remains to estimate $\det((d\varphi_t)_\x)$ which can be done using the canonical frame mentioned above. The explanations on this will occupy the rest of this section.

Let $\x$ be a point where the function $\mathfrak f$ is twice differentiable and let $(E(t), F(t))$ be a canonical frame at the point $(\x,d\mathfrak f_\x)$. Let $\varsigma_i=\pi_*(F_i(0))$ be the projection of the frame $F(0)$ onto the tangent bundle $TM$. Let $dd\mathfrak f$ be the differential of the map $\x\mapsto d\mathfrak f_{\x}$ which pushes the above frame on $T_{\x}M$ to a tuple of vectors in $T_{(\x,d\mathfrak f)}T^*M$. Therefore, we can let $A(t)$ and $B(t)$ be the matrices defined by
\begin{equation}\label{AB1}
dd\mathfrak f(\varsigma) =A(t)E(t)+B(t)F(t),
\end{equation}
where $\varsigma=\left(\varsigma_1,\cdots,\varsigma_{2n+1}\right)^T $ and $dd\mathfrak f(\varsigma)$ is the column obtained by applying $dd\mathfrak f$ to each entries of $\varsigma$.
By (\ref{AB1}) and the definition of $\varphi_t$, we have
\begin{equation}\label{conjugate}
d\varphi_t(\varsigma)=B(t)(\pi_* e^{t\vec\ham}_*F(t)).
\end{equation}
Note that $S(t)$ is defined for all $t$ in $(0,1)$. Indeed, $S$ is defined by (\ref{AB1}) and $S(t)=B(t)^{-1}A(t)$ (not as a solution of a differential equation). Since the geodesic $\varphi_t(\x)$ is minimizing, there is no conjugate point along it (see \cite{AgSa}). By (\ref{conjugate}) and $d\varphi_1=0$, it follows that $B(t)$ is invertible and $S(t)$ is defined for all $t$ in $[0,1)$. Besides, the tangent spaces to the set $\{(\x,d\mathfrak f_\x):\x\in M\}$ at points of smoothness are Lagrangian. This together with \eqref{AB1} implies that the matrix $S(t)$ is symmetric.

\begin{lemma}\label{keylem}
Let $S(t)=B(t)^{-1}A(t)$. Then
\[
\mu(\varphi_t(U))= \int_U e^{-\int_0^t\tr(S(\tau)C_2)d\tau}d\mu(\x).
\]

Moreover, $S(t)$ satisfies the following matrix Riccati equation
\[
\dot S(t)-S(t)C_2S(t)+C_1^TS(t)+S(t)C_1-R(t)=0,\quad \lim_{t\to 1}S(t)^{-1}=0.
\]
\end{lemma}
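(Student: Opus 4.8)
## Proof proposal for Lemma \ref{keylem}

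The plan is to establish the two assertions separately: first the volume formula, then the Riccati equation for $S(t)$.

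\textbf{The volume formula.} Starting from \eqref{vol}, it suffices to compute $\det((d\varphi_t)_\x)$ with respect to a frame adapted to the Riemannian metric. The natural choice is the projected frame $\varsigma=(\varsigma_1,\dots,\varsigma_{2n+1})^T$ with $\varsigma_i=\pi_*(F_i(0))$, since the conditions (1)--(3) of Theorem \ref{frame} force this to be, up to a computable volume factor, an orthonormal frame at $\x$ (one checks, using the identification of $\ver$ with $TM$ and Theorem 3.2, that the $F_i(0)$ project to an orthonormal-up-to-sign basis of $T_\x M$). By \eqref{conjugate}, $d\varphi_t(\varsigma)=B(t)\,(\pi_*e^{t\vec\ham}_*F(t))$, so
\[
\det((d\varphi_t)_\x)=\det B(t)\cdot\det\bigl(\pi_*e^{t\vec\ham}_*F(t)\text{ in the frame }\varsigma\bigr).
\]
The second determinant is handled by a Wronskian/Liouville argument: differentiating and using the structural equations (4)--(5) of Theorem \ref{frame} — in particular $\dot F=-RE-C_1^TF$ and the fact that $\pi_*$ kills the $E$-components tangent to the Jacobi curve along the flowed frame — one finds that the logarithmic derivative of this determinant is $-\tr C_1=0$ (since $C_1$ is strictly upper triangular). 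Hence that factor is constant in $t$, equal to its value at $t=0$, which is $1$ (or $\pm1$, absorbed into absolute values). It then remains to show $\frac{d}{dt}\log\det B(t)=-\tr(S(t)C_2)$. Differentiating \eqref{AB1} along the flow, substituting the structural equations, and matching the $F(t)$-components gives a linear ODE for $(A,B)$, namely $\dot B = -A C_2 + B C_1^T$ hence $\dot B = B(-SC_2 + C_1^T)$; Jacobi's formula then yields $\frac{d}{dt}\log\det B = \tr(-SC_2+C_1^T) = -\tr(SC_2)$ since $\tr C_1^T = 0$. Integrating from $0$ to $t$ with $B(0)=I$ (the frame $\varsigma$ at $t=0$ has $A(0)=dd\mathfrak f$ expressed in $E(0)$, $B(0)=I$) gives $\det B(t)=e^{-\int_0^t\tr(S(\tau)C_2)\,d\tau}$, and combining with \eqref{vol} finishes the first claim.

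\textbf{The Riccati equation.} From the linear system $\dot A = A C_1 + (\text{terms from }\dot E,\dot F)$ and $\dot B$ computed above — more precisely, differentiating \eqref{AB1} and collecting coefficients of $E(t)$ and $F(t)$ using items (4)--(5) of Theorem \ref{frame} — one obtains
\[
\dot A = A C_1 - B R + (\text{correction}),\qquad \dot B = -A C_2 + B C_1^T.
\]
Then $S=B^{-1}A$ satisfies $\dot S = B^{-1}\dot A - B^{-1}\dot B\,B^{-1}A = B^{-1}\dot A - B^{-1}\dot B\,S$; substituting the two expressions above and using $B^{-1}A = S$ repeatedly collapses everything to
\[
\dot S - S C_2 S + C_1^T S + S C_1 - R = 0,
\]
as claimed. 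The boundary condition $\lim_{t\to1}S(t)^{-1}=0$ is exactly the statement that $B(t)$ stays invertible on $[0,1)$ while $d\varphi_1=0$: from \eqref{conjugate}, $d\varphi_1(\varsigma)=B(1)(\pi_*e^{\vec\ham}_*F(1))=0$ with the projected flowed frame still a basis forces the blow-up of $S$, i.e. $S^{-1}\to0$; this is where the absence of conjugate points along the minimizing geodesic (\cite{AgSa}) is used to guarantee $B(t)$ is invertible for $t<1$ so that $S$ is genuinely defined there.

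\textbf{Main obstacle.} The routine-but-delicate part is the bookkeeping in differentiating \eqref{AB1}: one must carefully track that $\pi_*$ annihilates the vertical directions and that the Lagrangian/symplectic normalizations $\omega(F_i,E_j)=\delta_{ij}$ are preserved, so that the matrices $C_1,C_2,R$ appearing in the structural equations are exactly the ones entering the Riccati equation with no stray conjugation factors. The genuinely non-formal input — which I would state carefully rather than re-derive — is that $S(t)$ is well-defined and symmetric on all of $[0,1)$: symmetry comes from Lagrangianity of the tangent space to $\{(\x,d\mathfrak f_\x)\}$ together with \eqref{AB1}, and definedness comes from the no-conjugate-point property of minimizers. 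Everything else is linear algebra and the Liouville formula.
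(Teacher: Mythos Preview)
Your overall architecture matches the paper's: derive the linear system \eqref{AB} from \eqref{AB1} and the structural equations, use Jacobi's formula for $\det B(t)$, and differentiate $S=B^{-1}A$ to get the Riccati equation with the blow-up condition from $d\varphi_1=0$. The Riccati derivation and the boundary-condition discussion are essentially identical to the paper's.

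The genuine gap is in the step $|\det(d\varphi_t)_\x|=|\det B(t)|$, i.e.\ in your treatment of the ``second determinant'' $\mu\bigl(\pi_* e^{t\vec\ham}_*F(t)\bigr)/\mu(\varsigma)$. Two points fail here. First, the projected frame $\varsigma=\pi_*F(0)$ is \emph{not} orthonormal in the sub-Riemannian contact setting: one has $|\vol(\varsigma)|=|\nabla_\hor\mathfrak f(\x)|=d_{CC}(\x_0,\x)$, not $1$; Theorem~3.2 does not give what you claim. Second, your Wronskian/Liouville computation only captures the contribution of $\dot F=-RE-C_1^TF$ (which indeed yields $-\tr C_1^T=0$ after using that $e^{t\vec\ham}_*E(t)$ is vertical). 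It ignores the term coming from the $t$-dependence of the base point, namely $\bigl(\mathcal L_{\vec\ham}\pi^*\mu\bigr)\bigl(e^{t\vec\ham}_*F(t)\bigr)$, and there is no reason this Lie-derivative term vanishes a priori.

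The paper closes this gap differently: it observes that $\tau\mapsto e^{t\vec\ham}_*F(t+\tau)$ is the canonical frame at $e^{t\vec\ham}(\x,d\mathfrak f_\x)$ and invokes \cite[Theorem~4.3]{LeLi}, which asserts that for a canonical frame at $(\y,\q)$ the Riemannian volume of $\pi_*F(0)$ equals $|\q^h|$. Since $|\q^h|^2=2\ham$ is conserved along the flow, both numerator and denominator equal $|\nabla_\hor\mathfrak f(\x)|$ and the ratio is $1$. So the result you want is true, but it is not a free consequence of $\tr C_1=0$; it rests on a structural fact about the projected canonical frame that you would need either to cite or to prove.
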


\begin{proof}
Note that $\tau\mapsto e^{t\vec\ham}_*F(t+\tau)$ is a canonical frame at $e^{t\vec\ham}(\x,d\mathfrak f)$. Therefore, by \cite[Theorem 4.3]{LeLi}, we have
\[
\begin{split}
&d_{CC}(\x_0, \x)|\det(d\varphi_t)|=|\nabla_\hor\mathfrak f(\x)||\det(d\varphi_t)|\\
&= |\vol(d\varphi_t(\varsigma))|=|\det(B(t))\vol(\pi_* e^{t\vec\ham}_*F(t))|\\
&= |\nabla_\hor\mathfrak f(\x)||\det(B(t))|= d_{CC}(\x_0,\x)|\det(B(t))|.
\end{split}
\]

Here $\nabla_\hor\mathfrak f$ denotes the horizontal gradient of $\mathfrak f$ defined by $d\mathfrak f(v)=\left<\nabla_\hor\mathfrak f,v\right>$, where $v$ is any vector in the distribution $\Di$.

By combining this with \eqref{vol}, we obtain
\begin{equation}\label{vol2}
\mu(\varphi_t(U))= \int_U |\det(B(t))|d\vol(\x).
\end{equation}

On the other hand, by differentiating \eqref{AB1} with respect to time $t$ and using the structural equation, we obtain
\begin{equation}\label{AB}
\dot A(t)+A(t)C_1-B(t)R_t=0,\quad \dot B(t)+A(t)C_2-B(t)C_1^T=0.
\end{equation}
Therefore,
\[
\frac{d}{dt}\det(B(t))=\det(B(t))\tr(B(t)^{-1}\dot B(t))=-\det(B(t))\tr(S(t)C_2).
\]

By setting $t=0$ and applying  $\pi_*$ on each side of \eqref{AB1}, we have $B(0)=I$. Therefore, we obtain
\[
\det(B(t))=e^{-\int_0^t\tr(S(\tau)C_2)d\tau}.
\]
By combining this with (\ref{vol2}), we obtain the first assertion.

Since $\varphi_1(\x)=\x_0$ for all $\x$, we have $d\varphi_1=0$ and so $B(1)=0$.
This together with \eqref{AB1} inplies that the matrix $A(1)$ is invertible , therefore $S(t)$ is invertible for all $t<1$ sufficiently close to $1$.
By \eqref{AB} and the definition of $S(t)$, we have
\[
\dot S(t)-S(t)C_2S(t)+C_1^TS(t)+S(t)C_1-R(t)=0,\quad \lim_{t\to 1}S(t)^{-1}=0
\]
as claimed.
\end{proof}

\smallskip

\section{Estimates for Solutions of Matrix Riccati Equations}
\label{Ricsec}

According to Lemma \ref{keylem}, we need to estimate the term $\tr(S(t)C_2)$. In this section, we provide two such estimates which lead to the main results.

Throughout this section, we assume that the matrix $R(t)$ from item (5) of Theorem \ref{frame} is of the form
\begin{equation}
\label{structmat}
R(t)=\left(\begin{array}{cccc}
0 & 0 &  O_{2n-2} & 0\\
0 & R_{bb}(t) &  R_{cb}(t) & 0\\
O_{2n-2}^T &  R_{cb}(t)^T & R_{cc}(t) & O_{2n-2}^T\\
0 & 0 & O_{2n-2} & 0
\end{array}
\right),
\end{equation}
where blocks in this matrix are chosen according to the following partition of the tuple of vectors  $E(t)$: $E(t)=(E_a(t), E_b(t), E_c(t)$ with
\[
E_a(t)=E_1(t), E_b(t)=E_2(t), E_c(t)=\bigl(E_3(t),\ldots, E_{2n+1}(t)\bigr)
\]
so that $E_{2n+1}(0)=\p^h+\p(v_0)v_0$ and
$O_{2n-2}$ is the zero matrix of size $1\times(2n-2)$. The specific form of this matrix (i.e. the presence of zero blocks) comes from items (1) and (3)  of Theorem \ref{compute}.

The following is a consequence of the result in \cite{Ro}.

\begin{lemma}\label{Riccati1}
Assume that the curvature $R(t)$ satisfies
\[
\left(\begin{array}{cc}
R_{bb}(t) &  R_{cb}(t)\\
R_{cb}(t)^T & R_{cc}(t)
\end{array}
\right)\geq \left(\begin{array}{cc}
\mathfrak{k_1} &  0\\
0 & \mathfrak{k_2}I_{2n-2}\\
\end{array}
\right),
\]
where $\mathfrak{k_1}$ and $\mathfrak{k_2}$ are two constants. Then
\[
e^{-\int_0^t\tr(C_2S(\tau))d\tau}\geq (1-t)^{2n+3}\frac{\mathcal M_1(\mathfrak{k_1},t)\mathcal M_2^{2n-2}(\mathfrak{k_2},t)}{\mathcal M_1(\mathfrak{k_1},0)\mathcal M_2^{2n-2}(\mathfrak{k_2},0)},
\]
where
\[
\begin{split}
&\mathfrak D(k,t)=\sqrt{|k|}(1-t),\\
&\mathcal M_1(k,t)=\begin{cases}
\frac{2-2\cos(\mathfrak D(k,t))-\mathfrak D(k,t)\sin(\mathfrak D(k,t))}{\mathfrak D(k,t)^4} & \text{if } k>0\\
\frac{1}{12} & \text{if } k=0\\
\frac{2-2\cosh(\mathfrak D(k,t))+\mathfrak D(k,t)\sinh(\mathfrak D(k,t))}{\mathfrak D(k,t)^4} & \text{if } k<0,\\
\end{cases}\\
&\mathcal M_2(k,t)=\begin{cases}
\frac{\sin(\mathfrak D(k,t))}{\mathfrak D(k,t)} & \text{if } k>0\\
1 & \text{if } k=0\\
\frac{\sinh(\mathfrak D(k,t))}{\mathfrak D(k,t)} & \text{if } k<0.
\end{cases}
\end{split}
\]
Moreover, equality holds if
\[
\left(\begin{array}{cc}
R_{bb}(t) &  R_{cb}(t)\\
R_{cb}(t)^T & R_{cc}(t)
\end{array}
\right)= \left(\begin{array}{cc}
\mathfrak{k_1} &  0\\
0 & \mathfrak{k_2}I_{2n-2}\\
\end{array}
\right),
\]
\end{lemma}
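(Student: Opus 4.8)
The plan is to bound $S(t)$ above by the solution of the \emph{constant-coefficient} model Riccati equation obtained by freezing the curvature at its lower bound, and then to integrate that model explicitly. First I would introduce the constant symmetric $(2n+1)\times(2n+1)$ matrix $\bar R=\mathrm{diag}(0,\mathfrak{k_1},\mathfrak{k_2}I_{2n-2},0)$, written in the block pattern of \eqref{structmat} (so $\bar R_{bb}=\mathfrak{k_1}$, $\bar R_{cb}=0$, $\bar R_{cc}=\mathfrak{k_2}I_{2n-2}$), and let $\bar S(t)$ be the solution of $\dot{\bar S}-\bar S C_2\bar S+C_1^T\bar S+\bar S C_1-\bar R=0$ with $\lim_{t\to1}\bar S(t)^{-1}=0$. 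Since the first and last rows and columns of both $R(t)$ (by \eqref{structmat}) and $\bar R$ vanish, the hypothesis of the Lemma says precisely that $R(t)\ge\bar R$ as symmetric $(2n+1)\times(2n+1)$ matrices.

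Next I would invoke the comparison principle for matrix Riccati equations from \cite{Ro}: two solutions of Riccati equations differing only in their zeroth-order term, with $R(t)\ge\bar R$ and the \emph{same} terminal blow-up at $t=1$, satisfy $S(t)\le\bar S(t)$ for all $t\in[0,1)$. This uses that both solutions exist on all of $[0,1)$ --- for $S$ because the geodesic is length minimizing and hence free of conjugate points (as in Lemma \ref{keylem}), for $\bar S$ from the explicit formula below --- and one applies \cite{Ro} on $[0,1-\eps]$ with the finite terminal data $S(1-\eps),\bar S(1-\eps)$ and lets $\eps\to0$, the monotone dependence of the finite-horizon comparison on the terminal value absorbing the anisotropic blow-up of $\bar S$ (of orders $(1-t)^{-3}$ and $(1-t)^{-1}$ in the $\{E_1,E_2\}$ block). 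Because $C_2\ge0$ and $\bar S(t)-S(t)\ge0$, this gives $\tr(C_2 S(t))\le\tr(C_2\bar S(t))$, hence
\[
e^{-\int_0^t\tr(C_2 S(\tau))\,d\tau}\ \ge\ e^{-\int_0^t\tr(C_2\bar S(\tau))\,d\tau}.
\]

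It then remains to compute the right-hand side. Running the argument in the proof of Lemma \ref{keylem} for the constant-coefficient system, $e^{-\int_0^t\tr(C_2\bar S)}=\det\bar B(t)$, where $(\bar A,\bar B)$ solve \eqref{AB} with $\bar R$ in place of $R_t$ and $\bar B(0)=I$, $\bar B(1)=0$. The matrices $C_1,C_2,\bar R$ are block-diagonal for the splitting $\{E_1,E_2\}\oplus\{E_3\}\oplus\cdots\oplus\{E_{2n}\}\oplus\{E_{2n+1}\}$ (switching off $R_{cb}$ in the model is exactly what decouples the $b$- and $c$-directions), and by uniqueness $\bar A,\bar B$ inherit this block form, so $\det\bar B(t)=\det\bar B_{ab}(t)\cdot\prod_{j=3}^{2n}\bar b_j(t)\cdot\bar b_{2n+1}(t)$. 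Each scalar block satisfies $\ddot{\bar b}+\mathfrak k\,\bar b=0$, $\bar b(0)=1$, $\bar b(1)=0$, with $\mathfrak k=\mathfrak{k_2}$ for $j=3,\dots,2n$ and $\mathfrak k=0$ for $j=2n+1$, giving $\bar b_j(t)=(1-t)\,\mathcal M_2(\mathfrak k,t)/\mathcal M_2(\mathfrak k,0)$; the $\{E_1,E_2\}$ block reduces, via $\tilde C_1$ and $\tilde C_2$, to the fourth-order equation $\bar b^{(4)}+\mathfrak{k_1}\,\bar b''=0$, and imposing $\bar B_{ab}(0)=I$, $\bar B_{ab}(1)=0$ and evaluating the $2\times2$ determinant gives $\det\bar B_{ab}(t)=(1-t)^4\,\mathcal M_1(\mathfrak{k_1},t)/\mathcal M_1(\mathfrak{k_1},0)$. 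Multiplying the $4+(2n-2)+1=2n+3$ powers of $(1-t)$ yields
\[
e^{-\int_0^t\tr(C_2\bar S(\tau))\,d\tau}=(1-t)^{2n+3}\,\frac{\mathcal M_1(\mathfrak{k_1},t)\,\mathcal M_2^{2n-2}(\mathfrak{k_2},t)}{\mathcal M_1(\mathfrak{k_1},0)\,\mathcal M_2^{2n-2}(\mathfrak{k_2},0)},
\]
which together with the previous step is the asserted inequality. For the equality clause, if the $bc$-block of $R(t)$ equals $\mathrm{diag}(\mathfrak{k_1},\mathfrak{k_2}I_{2n-2})$ then $R(t)=\bar R$, so $S(t)=\bar S(t)$ and the inequality becomes equality throughout.

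The comparison step is essentially a citation, so the hard part will be the explicit integration: the rank-one degeneracy of $C_2$ on $\{E_1,E_2\}$ forces $\bar S$ to blow up at two different rates and is precisely where the exact form of $\mathcal M_1$ (the combination $2-2\cos\mathfrak D-\mathfrak D\sin\mathfrak D$ and its hyperbolic analogue) must be pinned down from the fourth-order equation. The other point needing care is checking that \cite{Ro}'s comparison really survives the passage to the singular terminal data, and that for the $\mathfrak{k_1},\mathfrak{k_2}$ in question the model $\bar S$ --- equivalently $\mathcal M_1,\mathcal M_2$ --- has no conjugate point in $[0,1)$, so that the comparison holds on the whole interval.
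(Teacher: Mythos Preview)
Your overall strategy---freeze the curvature at its lower bound, compare $S$ to the constant-coefficient model $\bar S$, then integrate the model explicitly---is exactly the paper's. The explicit integration via $\det\bar B$ and the block decoupling is fine and in fact a bit cleaner than the paper's route, which instead writes down the inverse $\Lambda(t)=\bar S(1-t)^{-1}$ in closed form (using \cite{Le}) and then integrates $\tr(C_2\Lambda(t)^{-1})$.

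The gap is in your comparison step at the singular endpoint. You propose to apply \cite{Ro} on $[0,1-\eps]$ with terminal data $S(1-\eps)$ and $\bar S(1-\eps)$ and then let $\eps\to0$, saying that ``monotone dependence on the terminal value absorbs the anisotropic blow-up.'' But Royden's theorem needs an \emph{ordering} of the terminal data to produce an ordering on the interval; knowing only that both $S(1-\eps)$ and $\bar S(1-\eps)$ blow up is not enough---you would already need $S(1-\eps)\le\bar S(1-\eps)$, which is what you are trying to prove. There is no general principle by which ``same blow-up'' replaces an inequality on terminal values.

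The paper resolves this by passing to the inverse and reversing time: set $U(t)=S(1-t)^{-1}$ and $\Lambda(t)=\bar S(1-t)^{-1}$. Then $U(0)=\Lambda(0)=0$, a perfectly finite common initial value, and both satisfy Riccati equations (with the roles of $R$ and $C_2$ swapped and signs flipped). A short Taylor computation gives
\[
U(t)=tC_2-\tfrac{t^2}{2}(C_1+C_1^T)+\tfrac{t^3}{3}\bigl(C_1C_1^T+C_2R(1)C_2\bigr)+O(t^4),
\]
and similarly for $\Lambda$ with $R(1)$ replaced by $\bar R$; this shows $U(t)\ge\Lambda(t)$ for small $t>0$ (after an $\eps$-shift $\Lambda(t+\eps)$ to make Royden's hypotheses strict), and then a second application of \cite{Ro} on the original equation propagates $S(1-t)\le\Lambda(t)^{-1}=\bar S(1-t)$ to all of $[0,1)$. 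This is the missing idea: the blow-up is tamed by inverting, not by a limiting argument on the singular data. You flagged this as ``needing care''; it genuinely does, and the inverse-plus-Taylor maneuver is how the paper supplies it.
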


\begin{proof}
We only prove the case when both constants  $\mathfrak{k_1}$ and $\mathfrak{k_2}$ are positive. The proofs for other cases are similar and are therefore omitted. In the sequel we use the natural partial order on the space of symmetric matrices: $S_1\geq S_2$ if the symmetric matrix $S_1-S_2$ corresponds to a nonnegative definite quadratic form.

As was already mentioned before Lemma \ref{keylem},  the  $S(t)=B(t)^{-1}A(t)$ is defined for all $t$ in $[0,1)$.

Recall that $S(t)$ satisfies
\[
\dot S(t)-S(t)C_2S(t)+C_1^TS(t)+S(t)C_1-R(t)=0,\quad \lim_{t\to 1}S(t)^{-1}=0.
\]
We will estimate $S(t)$ using the comparison theorems for the matrix Riccati equation from \cite{Ro}. Below, we give the detail of this estimate.

Indeed, by (\ref{AB}), $\tilde A(t):=A(1-t)$ and $\tilde B(t):=B(1-t)$ satisfy
\[
\dot{\tilde A}(t)=\tilde A(t)C_1-\tilde B(t)\tilde R(t),\quad \dot{\tilde B}(t)=\tilde A(t)C_2-\tilde B(t)C_1^T,
\]
where $\tilde R(t)=R(1-t)$.

Note that $\tilde B(0)=0$ and $\tilde A(0)$ is invertible. A computation shows that
\[
\tilde A(t)=\tilde A(0)\left(I+tC_1-\frac{t^2}{2}C_2\tilde R(0)+O(t^3)\right)
\]
and
\[
\tilde B(t)=\tilde A(0)\left(tC_2+\frac{t^2}{2}(C_1-C_1^T)-\frac{t^3}{6}\left(C_1C_1^T+C_2\tilde R(0)C_2\right)+O(t^4)\right).
\]

Let $U(t)=S(1-t)^{-1}=\tilde A(t)^{-1}\tilde B(t)$. It follows that
\[
U(t)=tC_2-\frac{t^2}{2}(C_1+C_1^T)+\frac{t^3}{3}(C_1C_1^T+C_2\tilde R(0)C_2)+O(t^4).
\]
Note that $tC_2-\frac{t^2}{2}(C_1+C_1^T)+\frac{t^3}{3}(C_1C_1^T+C_2\tilde R(0)C_2)\geq\frac{t^3}{12}I$ for all small enough $t$. Moreover, $U$ satisfies the following matrix Riccati equation
\[
\dot U(t)-U(t)\tilde R(t) U(t)+C_1U(t)+U(t)C_1-C_2=0.
\]

We compare this with the equation
\begin{equation}\label{compare}
\dot \Lambda(t)-\Lambda(t)K \Lambda(t)+C_1\Lambda(t)+\Lambda(t)C_1-C_2=0,
\end{equation}
where $K=\left(\begin{array}{cccc}
0 & 0 & 0 & 0\\
0 & \mathfrak{k_1} & 0 & 0\\
0 & 0 & \mathfrak{k_2}I_{2n-2} & 0\\
0 & 0 & 0 & 0\\
\end{array}
\right)$.

A computation using the result in \cite{Le} shows that
\[
\Lambda(t)=\left(
             \begin{array}{cccc}
               -\frac{t}{\mathfrak k_1}+\frac{\tan(\sqrt{\mathfrak k_1}t)}{\mathfrak k_1^{3/2}} & \frac{1}{\mathfrak k_1}-\frac{\sec(\sqrt{\mathfrak k_1}t)}{\mathfrak k_1} & 0 & 0 \\
               \frac{1}{\mathfrak k_1}-\frac{\sec(\sqrt{\mathfrak k_1}t)}{\mathfrak k_1} & \frac{\tan(\sqrt{\mathfrak k_1}t)}{\sqrt{\mathfrak k_1}} & 0 & 0 \\
               0 & 0 & \frac{\tan(\sqrt{\mathfrak k_2}t)}{\sqrt{\mathfrak k_2}}I_{2n-2} & 0 \\
               0 & 0 & 0 & t \\
             \end{array}
           \right)
\]
satisfies (\ref{compare}). Note also that
\[
\Lambda(t)=tC_2-\frac{t^2}{2}(C_1+C_1^T)+\frac{t^3}{3}(C_1C_1^T+C_2KC_2)+O(t^4).
\]
Next, we apply \cite[Theorem 1]{Ro} to $U(t)$ and $\Lambda(t+\epsilon)$, then let $\epsilon$ goes to zero. We obtain
\[
U(t)\geq \Lambda(t)
\]
for all small enough $t$ for which both $U$ and $\Lambda$ are defined. For the same set of $t$, we also have
\begin{equation}\label{compare2}
S(1-t)\leq \Gamma(1-t),
\end{equation}
where $\Gamma(1-t)=\Lambda(t)^{-1}$. By applying \cite[Theorem 1]{Ro} again, we obtain (\ref{compare2}) for all $t$ in $[0,1]$.

Since the product of non-negative definite symmetric matrices has non-negative trace, it follows that $\tr(C_2S(t))\leq \tr(C_2 \Gamma(t))$. The assertion follows from integrating the above inequality.
\end{proof}

Next, we consider the case where the assumptions are weaker than those in Lemma \ref{Riccati1}.

\begin{lemma}\label{Riccati2}
Assume that the curvature $R(t)$ satisfies $R_{bb}(t)\geq \mathfrak k_1$ and $\tr(R_{cc}(t))\geq (2n-2)\mathfrak k_2$ for some constants $\mathfrak{k_1}$ and $\mathfrak{k_2}$. Then
\[
e^{-\int_0^t\tr(C_2S(\tau))d\tau}\geq (1-t)^{2n+3}\frac{\mathcal M_1(\mathfrak{k_1},t)\mathcal M_2^{2n-2}(\mathfrak{k_2},t)}{\mathcal M_1(\mathfrak{k_1},0)\mathcal M_2^{2n-2}(\mathfrak{k_2},0)}.
\]
\end{lemma}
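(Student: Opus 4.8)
The plan is to reduce the matrix Riccati equation for $S(t)$ to two decoupled scalar Riccati inequalities by taking appropriate partial traces, then to compare each scalar inequality with the exact scalar equations whose solutions produce the functions $\mathcal M_1$ and $\mathcal M_2$. Concretely, write $S(t)$ in the block form dictated by the partition $E=(E_a,E_b,E_c)$ from \eqref{structmat}, and let $S_{aa},S_{ab},S_{bb},S_{bc},S_{cc}$, etc., denote the corresponding blocks. From $C_1,C_2$ as given in Theorem \ref{frame} and the structure of $R(t)$ in \eqref{structmat}, I would extract the equations for the scalar quantity $s_1(t):=S_{bb}(t)$ (the $(b,b)$-entry, since that block is $1\times 1$) and for the scalar quantity $s_2(t):=\tfrac{1}{2n-2}\tr(S_{cc}(t))$. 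The key observation is that $\tr(C_2 S)=S_{bb}+\tr(S_{cc}) = s_1 + (2n-2)s_2$ up to the contribution of the first ($a$) block, so controlling $s_1$ and $\tr(S_{cc})$ from above controls $\tr(C_2S)$ from above, hence $e^{-\int_0^t\tr(C_2S)}$ from below.

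The main work is the decoupling. For the $(c,c)$-block, the Riccati equation gives $\dot S_{cc} - (S_{cc}^2 + S_{cb}S_{bc} + \text{other cross terms}) - R_{cc} = \cdots$; taking the trace and using $\tr(S_{cb}S_{bc})=\tr(S_{bc}^TS_{bc})\geq 0$ and $\tr(S_{cc}^2)\geq \tfrac{1}{2n-2}(\tr S_{cc})^2$ (Cauchy–Schwarz for the symmetric matrix $S_{cc}$), together with the hypothesis $\tr(R_{cc})\geq (2n-2)\mathfrak k_2$, I would obtain a scalar differential inequality of the form $\dot y \leq y^2/(2n-2)\cdot(2n-2) + \cdots - (2n-2)\mathfrak k_2$ — more precisely, dividing by $2n-2$, an inequality $\dot s_2 \leq s_2^2 + (\text{linear-in-}s_2\text{ terms from }C_1) - \mathfrak k_2$. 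For the $(b,b)$-block one argues similarly but more directly since it is already scalar: the cross terms with the $a$-block and $c$-block contribute with a favourable sign (squares), and $R_{bb}\geq \mathfrak k_1$ gives $\dot s_1 \leq s_1^2 + (\text{linear term}) - \mathfrak k_1$, possibly after accounting for the coupling of $E_b$ with $E_a$ through $\tilde C_1$. I would have to be careful about exactly which linear terms from $C_1$ survive in each scalar equation — these are the terms producing the polynomial factors $(1-t)$ rather than the trigonometric ones.

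Having isolated the two scalar Riccati inequalities, I would invoke the scalar comparison theorem (the one-dimensional case of \cite{Ro}, or the explicit computation behind \cite{Le}) to compare $s_1$ with the solution of the scalar equation that, when exponentiated and integrated, yields the factor $\mathcal M_1(\mathfrak k_1,t)$, and to compare $s_2$ with the solution yielding $\mathcal M_2(\mathfrak k_2,t)$ — these are precisely the diagonal blocks of the matrix $\Lambda(t)$ already exhibited in the proof of Lemma \ref{Riccati1}, namely the $\tan$-type entry governing the $b$-direction and the $\tfrac{\tan(\sqrt{\mathfrak k_2}t)}{\sqrt{\mathfrak k_2}}$ entry governing the $c$-directions. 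Since $\tr(C_2S) = s_1 + (2n-2)s_2 \leq \bar s_1 + (2n-2)\bar s_2$ where $\bar s_i$ are the comparison solutions, integrating gives
\[
e^{-\int_0^t\tr(C_2S(\tau))d\tau}\geq e^{-\int_0^t \bar s_1(\tau)d\tau}\cdot e^{-(2n-2)\int_0^t \bar s_2(\tau)d\tau},
\]
and by the same elementary identities used in Lemma \ref{Riccati1} the right-hand side equals $(1-t)^{2n+3}\mathcal M_1(\mathfrak k_1,t)\mathcal M_2^{2n-2}(\mathfrak k_2,t)\big/\bigl(\mathcal M_1(\mathfrak k_1,0)\mathcal M_2^{2n-2}(\mathfrak k_2,0)\bigr)$, which is the claim. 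The hard part will be verifying that the partial-trace operation genuinely decouples the system — i.e. that all the cross-block terms one throws away (the off-diagonal blocks $S_{ab},S_{bc}$, and the $a$-block) either vanish by the structure of $C_1,C_2,R$ or enter with a sign that only helps the inequality — and that the residual linear terms assemble into exactly the two scalar models above; the boundary condition $\lim_{t\to1}S(t)^{-1}=0$ must be propagated to each scalar quantity to justify applying the comparison theorem on all of $[0,1)$ rather than only near $t=1$.
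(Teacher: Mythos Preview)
Your overall strategy---extract scalar Riccati inequalities by taking partial traces of the diagonal blocks, then compare with the explicit solutions from Lemma \ref{Riccati1}---is exactly the paper's approach. However, the specific decoupling you propose has a genuine gap.

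The problem is the $(b,b)$-entry. Writing out that entry of the matrix Riccati equation with $C_1,C_2$ as in Theorem \ref{frame}, one finds
\[
\dot S_{bb}=S_{bb}^2+\sum_{j\geq 3}S_{bj}^2-2S_{ab}+R_{bb}.
\]
The term $-2S_{ab}$ comes from $C_1^TS+SC_1$ (since $(\tilde C_1)_{ab}=1$) and has \emph{no definite sign}. It is not ``linear in $s_1$'' but linear in an independent off-diagonal variable, so you cannot absorb or discard it. Consequently the scalar $s_1=S_{bb}$ does \emph{not} satisfy a closed differential inequality of the form $\dot s_1\geq s_1^2+\mathfrak k_1$, and your comparison for the $b$-direction fails. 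The paper's remedy is to keep the $a$- and $b$-directions together: one sets
\[
S_1(t)=\begin{pmatrix}S_{aa}&S_{ab}\\ S_{ab}&S_{bb}\end{pmatrix}
\]
and observes that the block equation for $S_1$ involves the cross terms $S_2S_2^T, S_3S_3^T\geq 0$ only additively, yielding a genuine $2\times 2$ matrix Riccati inequality
\[
\dot S_1\geq S_1\tilde C_2S_1-\tilde C_1^TS_1-S_1\tilde C_1+\begin{pmatrix}0&0\\0&\mathfrak k_1\end{pmatrix}.
\]
One then applies the \emph{matrix} comparison of Lemma \ref{Riccati1} to this $2\times 2$ block to bound $\tr(\tilde C_2S_1)$; this is what produces the $\mathcal M_1$-factor together with the extra powers of $(1-t)$.

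A second, smaller issue: in your partition $E_c=(E_3,\ldots,E_{2n+1})$ the block $S_{cc}$ is $(2n-1)\times(2n-1)$, yet you divide by $2n-2$ in defining $s_2$ and in the Cauchy--Schwarz step. That mismatch would degrade the final exponent. The paper avoids this by using the additional structure of $R$: the last row/column of $R$ vanishes (item (3) of Theorem \ref{compute}), so it separates $E_{2n+1}$ from $E_3,\ldots,E_{2n}$. The scalar $S_6:=S_{2n+1,2n+1}$ then obeys $\dot S_6\geq S_6^2$ with terminal blow-up, giving $S_6\leq (1-t)^{-1}$ and one clean factor of $(1-t)$; the trace argument you describe is applied only to the genuinely $(2n-2)$-dimensional middle block $S_4$, where Cauchy--Schwarz with the correct denominator $2n-2$ and $\tr(R_{cc})\geq(2n-2)\mathfrak k_2$ give the $\mathcal M_2^{2n-2}$-factor.
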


\begin{proof}
Once again, we only prove the case when both constants  $\mathfrak{k_1}$ and $\mathfrak{k_2}$ are positive. Let us write
\[
S(t)=\left(\begin{array}{ccc}
S_1(t) & S_2(t) & S_3(t)\\
S_2(t)^T & S_4(t) & S_5(t)\\
S_3(t)^T & S_5(t)^T & S_6(t)
\end{array}\right),
\]
where $S_1(t)$ is a $2\times 2$ matrix and $S_6(t)$ is $1\times 1$. Then
\begin{equation}\label{Seqn}
\begin{split}
&\dot S_1(t)-S_1(t)\tilde C_2S_1(t)-S_2(t)S_2(t)^T\\
&-S_3(t)S_3(t)^T+\tilde C_1^TS_1(t)+S_1(t)\tilde C_1-R_1(t)=0,\\
&\dot S_4(t)-S_4(t)^2-S_5(t)S_5(t)^T-S_2(t)^T\tilde C_2S_2(t)-R_{cc}(t)=0,\\
&\dot S_6(t)-S_6(t)^2-S_5(t)^TS_5(t)-S_3(t)^T\tilde C_2S_3(t)=0,
\end{split}
\end{equation}
where $\tilde C_1=\left(\begin{array}{cc}
0 & 1 \\
0 & 0 \\
\end{array}
\right)$,\ $\tilde C_2=\left(\begin{array}{cc}
0 & 0 \\
0 & 1 \\
\end{array}
\right)$, and $R_1(t)=\left(\begin{array}{cc}
0 & 0\\
0 & R_{bb}(t)\\
 \end{array}
\right)$.

By the same argument as in Lemma \ref{Riccati1}, we have
\begin{equation}\label{split1}
\begin{split}
\tr(\tilde C_2S_1(t))&\leq \frac{\sqrt{\mathfrak k_1}(\sin(\mathfrak D(\mathfrak k_1,t))-\mathfrak D(\mathfrak k_1,t)\cos(\mathfrak D(\mathfrak k_1,t)))}{2-2\cos(\mathfrak D(\mathfrak k_1,t))-\mathfrak D(\mathfrak k_1,t)\sin(\mathfrak D(\mathfrak k_1,t))}.
\end{split}
\end{equation}

For the term $S_4(t)$, we can take the trace and obtain
\[
\begin{split}
\frac{d}{dt} \tr(S_4(t))&\geq \frac{1}{2n-2}\tr(S_4(t))^2+(2n-2)\mathfrak k_2.
\end{split}
\]

Therefore, an argument as in Lemma \ref{Riccati1} again gives
\begin{equation}\label{split2}
\tr S_4(t)\leq \sqrt{|\mathfrak k_2|}(2n-2)\cot\left(\mathfrak D(\mathfrak k_2,t)\right).
\end{equation}

Finally, for the term $S_6(t)$, we also have
\[
\dot S_6(t)\geq S_6(t)^2.
\]
Therefore,
\[
S_6(t)\leq \frac{1}{1-t}.
\]

By combining this with (\ref{split1}) and (\ref{split2}), we obtain
\[
\begin{split}
\tr(C_2S(t))&\leq \sqrt{|\mathfrak k_2|}(2n-2)\cot\left(\mathfrak D(\mathfrak k_2,t)\right)+\frac{1}{1-t}\\
&+\frac{\sqrt{\mathfrak k_1}(\sin(\D(\mathfrak k_1,t))-\D(\mathfrak k_1,t)\cos(\D(\mathfrak k_1,t)))}{2-2\cos(\D(\mathfrak k_1,t))-\D(\mathfrak k_1,t)\sin(\D(\mathfrak k_1,t))}.
\end{split}
\]
The rest follows as in Lemma \ref{Riccati1}.
\end{proof}

Finally, we finish the proof of the main result. Note that Propositions \ref{maincor2},  \ref{maincor3}, and \ref{maincor4} are consequences of the proof of Theorem \ref{main} and equality case of Lemma \ref{Riccati1}.

\begin{proof}[Proof of Theorem \ref{main}]
If $E(t), F(t)$ is a canonical frame at the point $(\x,d\mathfrak f_{\x})$ in the cotangent bundle $T^*M$, then
\[
t\mapsto (e^{\tau\vec \ham}_*(E(t+\tau)), e^{\tau\vec \ham}_*(F(t+\tau)))
\]
is a canonical frame at the point $e^{\tau\vec \ham}(\x,d\mathfrak f_{\x})$. It follows from this that $R(t)$ is the matrix representation of the operator $\mathfrak R_{e^{t\vec\ham}(\x,d\mathfrak f)}$ with respect to the frame $e^{\tau\vec \ham}_*(E(\tau))$.

Since $v_0$ is a symmetry, the function $u_0(\x,\p)=\p(v_0(\x))$ is constant along the flow $e^{t\vec\ham}$ (see for instance \cite{Mo}). Therefore, by the assumptions and Theorem \ref{compute}, $R_{bb}(t)$ and $\tr(R_{cc}(t))$ are bounded below by $k_1|\nabla_{\hor}\mathfrak f|^2+u_0^2(\x,d\mathfrak f)$ and $(2n-2)\left(k_2|\nabla_\hor\mathfrak f|^2+\frac{u_0^2(\x,d\mathfrak f)}{4}\right)$, respectively.  Therefore, the assumptions of Lemma \ref{Riccati2} are satisfied. By combining this with Lemma \ref{keylem}, the result follows.
\end{proof}

\smallskip

\section{Appendix}

In this appendix, we give a proof of Corollary \ref{Poincare} which is a minor modification of the one in \cite{LoVi2}.

\begin{proof}[Proof of Corollary \ref{Poincare}]
Let $x'$ and $x_0$ be two points on the manifold $M$. Let $\mathfrak f(x)=-\frac{1}{2}d^2(x,x')$ and let
\[
\varphi_t(x):=\pi(e^{t\vec H}(d\mathfrak f_{x})).
\]

Recall that $t\mapsto\varphi_t(x)$ is a minimizing geodesic connecting $x$ and $x'$ for $\eta$-almost all $x$. Therefore, by the measure contraction property,
\[
\begin{split}
&f(x')-\left<f\right>_{B_{x_0}(R)}\\
&=\frac{1}{\eta(B_{x_0}(R))}\int_{B_{x_0}(R)}f(\varphi_1(x))-f(x)d\eta(x)\\
&= \frac{1}{\eta(B_{x_0}(R))}\int_{B_{x_0}(R)}\int_0^1\left<\nabla_\hor f(\varphi_t(x)),\dot \varphi_t(x)\right>dt d\eta(x)\\
&\leq \frac{kR}{\eta(B_{x_0}(R))}\int_0^1\int_{B_{x_0}(R)}|\nabla_\hor f(\varphi_t(x))|d\eta(x)dt\\
\end{split}
\]
\[
\begin{split}
&= \frac{kR}{\eta(B_{x_0}(R))}\int_0^1\int_{\varphi_t(B_{x_0}(R))}|\nabla_\hor f(x)|d(\varphi_t)_*\eta(x)dt\\
&\leq \frac{kR}{\eta(B_{x_0}(R))}\int_0^1\int_{B_{x_0}(2R)}|\nabla_\hor f(x)|d(\varphi_t)_*\eta(x)dt\\
&\leq \frac{CkR}{\eta(B_{x_0}(R))}\int_{B_{x_0}(2R)}|\nabla_\hor f(x)|d\eta(x).
\end{split}
\]

It follows from this, Jensen's inequality, and Corollary \ref{double} that
\[
\begin{split}
&\int_{B_{x_0}(R)}|f(x')-\left<f\right>_{B_{x_0}(kR)}|^pd\eta(x')\\
&\leq C^pR^p\int_{B_{x_0}(R)}\left(\frac{1}{\eta(B_{x_0}(kR))}\int_{B_{x_0}(2kR)}|\nabla_\hor f(x)|d\eta(x)\right)^pd\eta(x')\\
&\leq C^pR^p\int_{B_{x_0}(2R)}|\nabla_\hor f(x)|^pd\eta(x).
\end{split}
\]

Finally, the result follows from this and \cite{Je}.
\end{proof}

\smallskip

\end{document}